\newcommand{\R}{\mathbb{R}}
\DeclareMathOperator{\Ric} {Ric}
\DeclareMathOperator{\cut} {cut}
\newtheorem{theorem}{Theorem}
\newtheorem{proposition}{Proposition}
\newtheorem{lemma}{Lemma}
\newtheorem{corollary}{Corollary}
\newtheorem{remark}{Remark}
\newcommand{\deriv}{d}
\newcommand{\supp}{\mathop{\mathrm{supp}}}
\newcommand{\assign}{:=}
\newcommand{\nequiv}{\not\equiv}
\newcommand{\tmop}[1]{\ensuremath{\operatorname{#1}}}
\newenvironment{enumeratealpha}{\begin{enumerate}[a{\textup{)}}] }{\end{enumerate}}
\title{Sharp dimension-free weighted bounds for the Bakry-Riesz vector} 
\author{Kamilia DAHMANI} 
\begin{document} 
\maketitle

\textbf{Abstract.} We prove a sharp dimensionless weighted estimate of the Riesz vector on a Riemannian manifold with non-negative Bakry-Emery curvature. The proof is by the method of Bellman functions, where the explicit expression of a Bellman function of six variables is essential.\\
Notice that our estimate is terms of the Poisson characteristic of the weight includes the case of the Gauss space as well as other spaces that are not necessarily of homogeneous type.

\section{Introduction}
The classical Hilbert transform can be defined via the equation $H = \partial
\circ (- \Delta)^{- 1 / 2}$ and the Riesz vector via $\mathcal{R} = d \circ (-\Delta  )^{-1/2}$. Given a complete Riemannian manifold $(X,g,\mu)$
of non-negative Ricci curvature $\tmop{Ric}$, this definition extends via the
use of the Laplace Beltrami operator. If in addition, we endow the space with
the measure $e^{- \varphi} d \mu$ and the Bakry-Emery curvature
$\tmop{Ric}_{\varphi} = \tmop{Ric} + \nabla^2 \varphi$ is non-negative, then
the Riesz vector is defined as $\mathcal{R}_{\varphi} = d \circ (-\Delta _{\varphi} )^{-1/2}$
with $\Delta _{\varphi} = \Delta - \nabla \varphi \cdot \nabla$.

The focus in this note is on the Riesz vector $\mathcal{R}_{\varphi}$ on Riemannian
manifolds $(X,g,\mu)$ defined respecting the measures of the type
$e^{- \varphi} d \mu$ where an additional weight is present: in weighted
spaces $L^2 (\omega) = L^2 (\omega e^{- \varphi} d \mu)$ we study the operator norm
of the Riesz vector.

On a complete Riemannian manifold $(X,g,\mu)$ endowed with
measure $e^{- \varphi} d \mu$ and non-negative Bakry-Emery curvature, we
show dimension-free sharp weighted norm estimates for the arising Riesz vector
in terms of the Poisson flow $A_2$ characteristic of the weight. Such
estimates are known to be in sharp dependence on the power
of the Poisson characteristic even when $X =\mathbbm{D}$ and $\varphi = 0$. 

\

In 1973, Hunt-Muckenhoupt-Wheeden \cite{HMW} proved that the Hilbert
transform is bounded on $L^2 (\omega)$ if and only if the weight $\omega$ satisfies the
so-called $A_2$ condition:
\[ Q_2 (\omega) \assign \sup_I \frac{1}{| I |} \int_I \omega \frac{1}{| I |} \int_I \omega^{-
   1} < \infty \]
where the supremum runs over intervals $I$. It has then been extended to all
Calderon-Zygmund operators \cite{H} and beyond \cite{AM}. The classes of
weights considered are only defined in terms of volume of balls, so this
entire theory has been extended to the doubling framework. Notice that our
measures here in this note are not necessarily doubling.

We are interested in a version of the $A_2$ class with charateristic $Q_2 (\omega)$
which is particularly well-suited for working with the Riesz transforms on
$X$. Namely, we use the Poisson-$A_2$ class with characteristic $\widetilde{Q}_2
(\omega)$, which considers Poisson averages instead of box averages in the
definition of $A_2$. This allows us to tackle non-homogeneous measures as well
as obtain a sharp bound free of dimension for the Riesz vector:
\[ \| \mathcal{R}_{\varphi} \|_{L^2 (\omega) \rightarrow L^2 (\omega)} \lesssim \widetilde{Q}_2 (\omega) .
\]
We stress both the continuity of the operator in this setting, its rate of
contiunuity, i.e. the first power of the characteristic $\widetilde{Q}_2 (\omega)$ as
well as the fact that implied constants do not depend upon the dimension. The linear estimate (in terms of the classical
characteristic that induces a dimensional growth) is very recent in the case
of general $X$ with bounded geometry and $\varphi = 0$ \cite{BFP}. This
present result has therefore the following novelties:
\begin{enumeratealpha}
  \item A weighted estimate holds even in the case $\varphi \nequiv 0$ (and in
  the presence of non-homogeneity).
  
  \item The estimate is sharp in terms of dependence on the power of
  $\widetilde{Q}_2
(\omega)$.
  
  \item The estimate is free of dimension.
\end{enumeratealpha}
In the last fifteen years, it has been of great interest to obtain optimal
operator norm estimates in Lebesgue spaces endowed with Muckenhoupt weights in
the sense of b). One asks for the growth of the norm of certain operators,
such as the Hilbert transform with respect to a characteristic assigned to the
weight, usually using the classical characteristic. Originally, the main
motivation for sharp estimates of this type came from certain important
applications to partial differential equations. See for example Fefferman-Kenig-Pipher \cite{FKP} and Astala-Iwaniec-Saksman \cite{AIS}.
Indeed, a long standing regularity problem has been solved through the optimal
weighted norm estimate of the Beurling-Ahlfors operator, a classical
Calderon-Zygmund operator, using the heat flow characteristic of the weight. See Petermichl-Volberg \cite{PV}. Since then, the area has been developing
rapidly.

Questions of such optimal norm estimates have become known as $A_2$
conjectures. This conjecture was solved by Petermichl-Volberg \cite{PV} in
the case of the Beurling-Ahlfors operator, by Petermichl in \cite{Petermichl}
for the Hilbert transform and then by Hytonen in \cite{H} for arbitrary
Calderon-Zygmund operators. Only very recently the sharp weighted theory has
been extended beyond the Calderon-Zygmund theory to so-called non-kernel
operators \cite{BFP} that contain Riesz transforms on manifolds (although
without the measure $e^{- \varphi}$).

The search for optimal estimates in weighted spaces has greatly improved our
understanding of central operators such as the Riesz transforms and has
developped numerous tools with a probabilistic flavor. Notably, the first
solution of an $A_2$ problem \cite{PV} uses an underlying estimate for
predictable martingale multipliers of dyadic martingales under a change of law
by Wittwer \cite{witt}. Her proof in turn uses important developments on a
corresponding two weight question in \cite{NTV}, using the Bellman function
method. It has been known at least since Gundy-Varopoulos \cite{GV} that the
Riesz transforms of a function can be written as conditional expectation of a
simple transformation of a martingale associated to the function. This is a
simple but beautiful fact that follows from a Littlewood-Paley type formula
for the Riesz vector using the Poisson flow. This is the reason behind the
availability of a certain transference method using said Bellman functions, a
control strategy for martingales. A beautiful observation by Bakry
\cite{bak} facilitates this transference in the given setting to manifolds.

This transference approach has also been used for the Hilbert transform on the
disk \cite{PW} in the early days of the sharp weighted theory and for the
Riesz vector in Euclidean space \cite{PDW} to obtain a sharp dimensionless
estimate with respect to the well adapted Poisson $A_2$ characteristic.

The Poisson $A_2$ characteristic arises naturally from the viewpoint of
martingales driven by space-time Brownian motion as in the representation
formulae for Riesz transforms. The Poisson $A_2$ class is adapted to the
stochastic process considered. Random walks and Poisson flows on manifolds are
a delicate matter and we refer the reader to the excellent text by Emery
\cite{Emery}.

Even in the case $\varphi = 0$, these operators are not necessarily of
Calderon-Zygmund type. These Riesz transforms fit into the class of non-kernel
operators, whose weighted theory was established in Auscher-Martell
\cite{AM}. The optimal weigted norm estimates for these types of operators, even without the extra $e^{- \varphi}$, have only recently been found in
\cite{BFP} (in terms of the classical charateristic). In all these proofs,
the doubling feature of the measure $\mu= e^{- 0} {\textmu}$ is heavily
used and dimensional growth occurs. Measures of the type considered in this
note can be non-doubling, such as in the case of the Gauss space, which is a
model example for our setting.

There is a spiked recent interest in weighted estimates in the non-doubling
setting. Very recently, the scalar and vector valued predictable discrete in
time martingale multipliers in the general non-homogeneous setting have been
considered, using the corresponding martingale $A_2$ characteristic. There are
two independent proofs by Thiele-Treil-Volberg \cite{TTV} as well as Lacey
\cite{L} for discrete in time filtrations on general probability measures.
See \cite{PD} and \cite{DP2} for continuous in time martingales. There have
been observations of failure of estimates for certain crucial operators,
called Haar Shifts, one of the earlier tools in the sharp weighted theory
\cite{LMP}. Very recently in \cite{V+} an
estimate for Calderon-Zygmund operators in terms of $A_2$ characteristic on
cells has been found using so-called sparse operators. The question remains
open, even for Calderon-Zygmund operators in non-doubling Euclidean space, in
terms of the classical characteristic using balls. The result here uses the
more forgiving bumped Poisson $A_2$ characteristic but is in turn free of
dimension and remains sharp in terms of the dependence on the weight's
charateristic. As mentioned before, the proof is by transference using Bellman
functions, resembling the strategy used by Carbonaro-Dragicevic \cite{DC} for
the unweighted case in $L^p$. Their proof relies on Bakry's idea of adapted
Poisson flow on one-forms \cite{bak} in combination with the concise but
powerful Bellman function of Nazarov-Treil \cite{NT} that is adapted to $L^p$
estimates. A key difference here is the complication of the weighted Bellman
function, that has to be known in a reasonably explicit manner. This Bellman
function of six variables is derived through an analysis of \cite{NTV} as
well as \cite{PW}. A similar function was constructed in \cite{PD}.
Properties that go beyond those needed for martingale multipliers are required
to obtain the desired Riesz transform estimates on manifolds, which is in a
sharp contrast to the Euclidean case \cite{PDW}, where existence of this
function suffices.\\
%

\section{Developement}

\subsection{Setting and notations}

	Let $(X,g,\mu)$ be a complete Riemannian manifold of dimension $N$ without boundary and such that constants are integrable. Let $\Delta$ be the
negative Laplace-Beltrami operator on $X$. Given $\varphi\in C^{2}(X)$,
consider the weighted (non-doubling) measure on $X$ defined by 
\begin{equation*}
d \mu_{\varphi}(x)=e^{-\varphi(x)}d \mu(x),
\end{equation*}
and denote by $\Delta_{\varphi}$ the associated weighted Laplacian defined on $%
C^{\infty}_{c}(X)$ by 
\begin{equation*}  
\Delta_{\varphi}f=\Delta f-\nabla f.\nabla \varphi.
\end{equation*}
Notice that for all $f,g \in C^{\infty} _c (X)$, we have 
\begin{equation} \label{page4}
\int_X (\nabla f, \nabla g)d \mu_{\varphi}(x)=-\int_X f\Delta_{\varphi}gd \mu_{\varphi}(x) =-\int_X g\Delta_{\varphi}fd \mu_{\varphi}(x).
\end{equation}
It was proved in \cite{bak} and \cite{Strich} that on complete Riemannian manifolds, the operator $\Delta_{\varphi}$  is essentially self-adjoint on $L^2(X,\mu _{\varphi})$. We will still note $\Delta_{\varphi}$ its unique self-adjoint extension. \\
The Bakry-Emery curvature tensor associated with $\Delta_{\varphi}$ is defined by
\[\Ric _{\varphi} = \Ric + \nabla ^2 \varphi,\]
where $\Ric$ denotes the Ricci curvature tensor on $X$ and $\nabla ^2 \varphi$ is a $2-$tensor. All over this paper, we will consider that $\Ric _{\varphi}\geq 0$.\\

Before we proceed, we recall some notions in differential geometry that will be useful.

For each $x$ in $X$, we denote the tangent space and its dual, the cotangent space at $x$ respectively by $T_xX$ and $T^*_xX$ so that 
\[TX=\cup _{x\in X} T_xX \text{ and } T^*X=\cup _{x\in X} T^*_xX.\]
We denote by $<\cdot,\cdot>$ either the inner product in $TM$ and $T^*X$, or in the Lebesgue space $L^2(X,\mu_{\varphi})$ with a subscript to avoid ambiguities.\\
By $\|\cdot\|_{L^{2}(X,\omega\mu_{\varphi})}$ and $\|\cdot%
\|_{L^{2}(T^{*}X,\omega^{-1}\mu_{\varphi})}$, we denote respectively the norm in $L^{2}(X,\omega\mu_{\varphi})$ and $L^{2}(T^{*}X,\omega^{-1}\mu_{\varphi})$, where $\omega$ and $\omega ^{-1}$ are weights that belong to $L^1_{loc}(X,\mu_{\varphi})$.

We denote by $d$ and $\nabla$ respectively the exterior and the covariant derivative and $d^*_{\varphi}$ and $\nabla ^*_{\varphi}$ their $L^2(\mu _{\varphi})$-adjoint operators. We also define $\overline{\nabla}$ as the total covariant derivative on $X \times \R_+$ that satisfies $|\overline{\nabla}\eta|=\sqrt{|\nabla\eta|^2+|\partial _t \eta|^2}$, for all $\eta$ in $C^{\infty}(T^*X(X\times \R_+))$.

We consider $\vec{\Delta}_{\varphi}=-(dd^*_{\varphi}+d^*_{\varphi}d)$ to be the weighted Hodge-De Rham Laplacian acting on $1-$forms. As for the Laplace Beltrami operator, $\vec{\Delta}_{\varphi}$ initially defined on smooth $1-$forms with compact support is essentially self-adjoint on $L^{2}(T^{*}X,\mu_{\varphi})$ and again, we will denote $\vec{\Delta}_{\varphi}$ its unique self-adjoint extension.

Finally, we set 
\[P_t=\exp (-t(-\Delta_{\varphi})^{1/2}) \text{ and } \vec{P}_t=\exp (-t(-\vec{\Delta}_{\varphi})^{1/2}).\]
Note that the semigroup $P_t$ acting on functions is an integral operator with positive kernel that we will note $p_t$ \cite{Strich}.\\

We are concerned in this paper with a special class of weights, called Poisson$-A_2$ and noted $\widetilde{A}_2$. We say that $\omega \in \widetilde{A}_2$ if and only if
\[\widetilde{Q}_2(\omega):=\sup _{(x,t) \in X \times \R _+}P_t(\omega)(x)P_t(\omega ^{-1})(x)< \infty. \]
The weights involved in this definition are a priori in $L^2(X,\mu_{\varphi})$.

\begin{remark} \label{remark1}
For $\omega \in L^1_{loc}(X,\mu_{\varphi})$, we define its two-sided truncation 
\[\omega _n = n^{-1} \chi _{\omega \leq n^{-1}} + \omega \chi _{n^{-1} \leq \omega \leq n} + n \chi _{\omega \geq n}, \]
where $\chi$ is the characteristic function and $n \in \mathbb{N}^*$. The truncated weight $\omega _n$ is clearly in $L^2(X,\mu_{\varphi})$ and satisfies some interesting properties that we are going to see later. For the moment, we are going to work with $\omega _n$ and then extend our results to $\omega$, including a definition for $\widetilde{Q}_2(\omega)$ when $\omega$ is only locally integrable. We are also going to suppose that $P_t\omega_n$ and $P_t\omega_n ^{-1}$ are finite almost everywhere so that $\widetilde{Q}_2(\omega_n)$ makes sense. 
\end{remark}

\begin{remark}
Throughout this paper, $C$ will denote constants whose values may change even in a chain of inequalities. These constants are independant of the dimension of the manifold and other important quantities.
\end{remark}

\subsection{Preliminaries}
The following lemma slightly differs from the one appearing in \cite{bak} since it involves weights. The stated results will be of great utility in Sections 3 and 5.
\begin{lemma}
\label{lemma1} For every $f\in C_{c}^{\infty }\left( X\right)$, $\vec{g} \in C_{c}^{\infty }\left( T^{\ast }X\right)$ and $\omega \in L^2\left( X, \mu_{\varphi}\right)$,

\begin{enumerate}

\item[a)] $\left\vert P_{t}f\left( x\right) \right\vert ^{2}\leq
P_{t}\left( \left\vert f\right\vert ^{2}\omega\right)\left( x\right) P_t \omega ^{-1}(x). $

\item[b)] $dP_{t}  f =\vec{P}_{t}d  f   $ \newline
If we also have $\Ric_{\varphi }\geq 0$ then

\item[c)] $\left\vert e^{t\vec{\Delta}_{\varphi }}\vec{g} \left( x\right)
\right\vert _{T^{\ast }_xX}\leq e^{t\Delta_{\varphi }}\left\vert \vec{g}
\left( x\right) \right\vert _{T_{x}^{\ast }X} $

\item[d)] $\left\vert \vec{P}_{t}\vec{g} \left( x\right) \right\vert
_{T^{\ast }_xX}^{2}\leq P_{t}\left(  \left\vert \vec{g} \right\vert ^{2}_{T_{x}^{\ast
}X}\omega ^{-1}_n\right)\left( x\right) P_t\omega_n(x). $

\end{enumerate}
\end{lemma}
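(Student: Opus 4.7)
The plan is to treat the four statements separately, with the first three essentially routine and the third carrying the only real geometric content. For \textbf{(a)}, since $P_t$ is represented by the positive kernel $p_t(x,y)$, writing
\[ p_t(x,y) f(y) = \bigl(p_t(x,y) \omega(y)\bigr)^{1/2} f(y) \cdot \bigl(p_t(x,y) \omega^{-1}(y)\bigr)^{1/2} \]
and applying the Cauchy-Schwarz inequality in $L^2(X,\mu_\varphi)$ with respect to $y$ immediately gives the stated inequality; no curvature assumption enters. For \textbf{(b)}, the identity $d\Delta_\varphi f = \vec{\Delta}_\varphi d f$ on $C^\infty_c(X)$ is a direct consequence of $d^2 = 0$ combined with the definition $\vec{\Delta}_\varphi = -(dd^*_\varphi + d^*_\varphi d)$. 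By the spectral calculus this lifts to $d(-\Delta_\varphi)^{1/2} = (-\vec{\Delta}_\varphi)^{1/2} d$, and then $dP_t = \vec{P}_t d$ follows after exponentiation.

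For \textbf{(c)}, I would invoke the weighted Bochner-Weitzenböck formula $\vec{\Delta}_\varphi \vec{h} = -\nabla^*_\varphi \nabla \vec{h} - \Ric_\varphi \vec{h}$ acting on $1$-forms. Setting $\vec{h}(t,\cdot) = e^{t\vec{\Delta}_\varphi}\vec{g}$ and regularizing $\phi_\epsilon := \sqrt{|\vec{h}|^2 + \epsilon^2}$, a direct computation combined with Kato's inequality $|\nabla \phi_\epsilon|^2 \leq |\nabla \vec{h}|^2$ and the hypothesis $\Ric_\varphi \geq 0$ yields the differential inequality
\[ (\partial_t - \Delta_\varphi)\phi_\epsilon \leq 0. \]
The parabolic comparison principle, applicable thanks to the essential self-adjointness and stochastic completeness of $\Delta_\varphi$ (as used in \cite{bak,Strich}), then gives $\phi_\epsilon(t,x) \leq e^{t\Delta_\varphi}\phi_\epsilon(0,x)$; letting $\epsilon \to 0$ concludes (c).

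Finally, for \textbf{(d)}, I would combine (c) with Bochner's subordination formula
\[ e^{-t\sqrt{-L}} = \int_0^\infty \frac{t}{\sqrt{4\pi s^3}} e^{-t^2/(4s)} e^{sL}\, ds, \]
applied to both $L = \Delta_\varphi$ and $L = \vec{\Delta}_\varphi$ against the same nonnegative subordination density. Inserting the pointwise bound from (c) under the integral produces the domination $|\vec{P}_t \vec{g}(x)|_{T^*_x X} \leq P_t(|\vec{g}|_{T^*X})(x)$. Squaring this and applying the Cauchy-Schwarz argument of (a) with the splitting $\omega_n^{-1/2}\cdot\omega_n^{1/2}$ in the kernel representation gives the required weighted bound. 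The main obstacle is the careful proof of (c): one must justify the regularization $\epsilon\to 0$ and the parabolic comparison principle on a non-compact manifold carrying the possibly non-doubling measure $\mu_\varphi$, which is where the standing hypotheses on $X$ (completeness, $\Ric_\varphi \geq 0$) are genuinely used; once (c) is in hand, the remaining items are essentially kernel estimates and subordination.
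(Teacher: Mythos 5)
Your proposal is correct and follows essentially the same route as the paper: items (a) and (d) are handled by the identical kernel-splitting Cauchy--Schwarz argument together with the pointwise domination $\vert\vec{P}_t\vec{g}\vert\leq P_t\vert\vec{g}\vert$. The only difference is that where the paper simply cites Bakry for (b), (c) and for that domination, you supply the standard underlying proofs (the intertwining $d\Delta_\varphi=\vec{\Delta}_\varphi d$ via $d^2=0$ and spectral calculus, the Bochner--Kato argument with the parabolic comparison principle, and Bochner subordination), which are precisely the arguments behind the cited results.
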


\begin{proof}
Items (b) and (c) in the lemma have been proved in \cite{bak}.\\
For item (a), we use the integral expression of $P_t$ and H\"{o}lder's inequality.
\begin{eqnarray*}
P_tf(x)&=&\int_X p_t^{1/2}(x,y)f(y)\omega^{1/2}(y)p_t^{1/2}(x,y)\omega^{-1/2}(y)d\mu_{\varphi}(y)\\
&\leq & \left( \int_X p_t(x,y)|f(y)|^2\omega(y)d\mu_{\varphi}(y)\right)^{1/2}\times \left( \int_X p_t(x,y)\omega^{-1}(y)d\mu_{\varphi}(y)\right)^{1/2}.
\end{eqnarray*}
To conclude, simply raise to the power 2 the above inequality.\\
To prove item (d), note that by \cite[Inequality (1.4)]{bak}, one can write 
\[\left\vert \vec{P}_{t}\vec{g} \left( x\right) \right\vert_{T^{\ast }_xX} \leq P_t |\vec{g}|_{T^{\ast }_xX}(x).\]
The proof is then analogous to the one of item (a).
\end{proof}

\section{Bilinear embedding and its corollary}

In this section, we state the main result of this paper and its corollary
for the Riesz transform. The proof of Theorem \ref{thm1} will be given
in Section \ref{sec_main_result}.

\begin{theorem}
\label{thm1} Suppose that $X$ is a complete Riemannian manifold without boundary and such that constants are integrale. Suppose also that $\Ric_{\varphi}\geq0$ and that $\omega_n$ and $\omega _n^{-1}$ are a.e positive weights defined as in Remark \ref{remark1}. Then for all $f \in C_c^{\infty}(X)$ and $\vec{g} \in C_c^{\infty}(T^*X)$, we have the following dimension-free estimate
\begin{equation} \label{E1}
\int_{0}^{\infty} \int_{X}|\overline{\nabla} P_{t}f(x)||\overline{\nabla} 
\vec{P_{t}}\vec{g}(x)| t \, d\mu_{\varphi}(x) d t \leq
20\widetilde Q_{2}(\omega_n)\|f\|_{L^{2}(X,\omega_n\mu_{\varphi})} \|\vec{g}%
\|_{L^{2}(T^{*}X,\omega^{-1}_n\mu_{\varphi})}.
\end{equation}
\end{theorem}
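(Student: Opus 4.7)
The plan is a heat-flow Bellman-function argument keyed to the Poisson semigroup, with the Bochner formula on $1$-forms used to control the vectorial piece. Set the six Poisson extensions
\[
\Phi=P_t(f^2\omega_n),\ \ \phi=P_tf,\ \ \Psi=P_t(|\vec g|^2\omega_n^{-1}),\ \ \vec\psi=\vec P_t\vec g,\ \ U=P_t\omega_n,\ \ V=P_t\omega_n^{-1},
\]
all viewed as functions on $X\times\R_+$. By Lemma~\ref{lemma1}~(a),(d) the tuple $(\Phi,\phi,\Psi,|\vec\psi|,U,V)$ lies in the convex domain $\mathcal{D}\subset\R^6$ cut out by $\phi^2\le\Phi V$, $|\vec\psi|^2\le\Psi U$ and $UV\le\widetilde Q_2(\omega_n)$. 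On $\mathcal{D}$ I would use the Bellman function $B$ of six real variables announced in the introduction, a refinement of the constructions in \cite{NTV} and \cite{PW}, satisfying a size bound $0\le B(X,x,Y,y,U,V)\le C\widetilde Q_2(\omega_n)\sqrt{XY}$ together with an infinitesimal second-order estimate whose effect, once evaluated against admissible gradient directions, pointwise dominates $|d\phi|\,|d\vec\psi|$.

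Define
\[
b(t)\;:=\;\int_X B\bigl(\Phi,\phi,\Psi,|\vec\psi|,U,V\bigr)(x,t)\,d\mu_\varphi(x).
\]
Since each coordinate $u_i$ of the argument is a Poisson extension, $\partial_t^2u_i+\Delta_\varphi u_i=0$ (with the Hodge Laplacian for $\vec\psi$), so the chain rule combined with the self-adjointness \eqref{page4} yields
\[
b''(t)\;=\;\int_X(d^2B)(\overline{\nabla}\vec u,\overline{\nabla}\vec u)\,d\mu_\varphi.
\]
The scalar slots of $B$ are handled directly by concavity, but the slot $|\vec\psi|$ is non-smooth at its zeros and its derivatives relate to $\vec\psi$ by Kato's inequality in the wrong direction. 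I close this gap through the Bochner-Weitzenb\"ock formula on $1$-forms, combined with $\Ric_\varphi\ge 0$, which gives
\[
\tfrac12(\Delta_\varphi+\partial_t^2)|\vec\psi|^2\;=\;|\overline{\nabla}\vec\psi|^2+\Ric_\varphi(\vec\psi,\vec\psi)\;\ge\;|\overline{\nabla}\vec\psi|^2,
\]
and similarly relates $|\overline{\nabla}|\vec\psi||$ to $|\overline{\nabla}\vec\psi|$ modulo curvature terms of favorable sign; this is the Lemma~\ref{lemma1}~(c) side of Bakry's framework. In conjunction with the chosen properties of $B$ this produces the pointwise bound $-b''(t)\ge\int_X|\overline{\nabla}\phi|\cdot|\overline{\nabla}\vec\psi|\,d\mu_\varphi$.

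Two integrations by parts in $t$ then yield
\[
\int_0^\infty\!\!\int_X t\,|\overline{\nabla}\phi|\,|\overline{\nabla}\vec\psi|\,d\mu_\varphi\,dt\;\le\;\int_0^\infty t(-b''(t))\,dt\;=\;b(0),
\]
once the boundary contributions $b(\infty)$ and $\lim_{T\to\infty}Tb'(T)$ are shown to vanish via the $L^2$-decay of the Poisson extensions and the size bound on $B$. Cauchy-Schwarz applied to the size bound finally gives $b(0)\le 20\,\widetilde Q_2(\omega_n)\|f\|_{L^2(X,\omega_n\mu_\varphi)}\|\vec g\|_{L^2(T^*X,\omega_n^{-1}\mu_\varphi)}$, where the explicit constant $20$ comes from the tight choice of $B$. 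The main obstacle is unquestionably the construction and explicit analysis of $B$ with the bilinear infinitesimal concavity compatible with the Bochner-Ricci trade-off: as the introduction stresses, in contrast to the Euclidean situation of \cite{PDW} where mere existence of such a function suffices, the non-trivial curvature geometry forces us to know $B$ in sufficiently rigid and explicit form to close the scalar-vector comparison. Secondary technical points are the approximation/truncation required to justify \eqref{page4} for the Bellman composition and the verification that the $t=0^+$ and $t=\infty$ boundary contributions vanish.
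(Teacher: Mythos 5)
Your plan follows essentially the same route as the paper: the six Poisson extensions fed into a weighted Bellman function of six variables, the Bochner--Weitzenb\"ock identity together with $\Ric_\varphi\ge 0$ to absorb the vector-valued slot via the sign condition $\partial_\nu\overline{B}\le 0$, two integrations by parts in $t$ with vanishing boundary terms, and the size bound to produce the right-hand side (the paper uses $0\le B\le C(Z+H)$ plus the $\lambda$-scaling trick $f\mapsto\lambda f$, $\vec g\mapsto\lambda^{-1}\vec g$ rather than your $\sqrt{XY}$-type size bound, but these are equivalent). The item you explicitly defer --- the construction of $B$ with $-d^2B\ge\frac{4}{Q}|d\zeta|\,|d\eta|$, the radiality in $\eta$, and $\partial_\nu\overline{B}\le 0$ --- is where the paper spends most of its effort (Section 4), and your spatial integration by parts additionally requires the cutoff argument with the comparison estimate $\Delta_\varphi\rho\le C/\rho$ under $\Ric_\varphi\ge 0$ that the paper carries out in Proposition \ref{above}.
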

Let $\mathcal{R}_{\varphi}$ denote the Riesz transform initially defined on the range space of $-\Delta_{\varphi }$, $R(-\Delta_{\varphi })$ by 
\[ \mathcal{R}_{\varphi} = d \circ (-\Delta _{\varphi} )^{-1/2}\]
and that extends to a contraction
\[\mathcal{R}_{\varphi}:\overline{R(-\Delta_{\varphi })} \rightarrow L^2(T^*X).\]
We have then the following corollary :

\begin{corollary} \label{coro}
\label{coro} Under the above conditions, 
\begin{equation*}
\|\mathcal{R}_{\varphi}f\|_{L^{2}(T^{*}X,\omega_n\mu_{\varphi})}\leq 80\widetilde
Q_{2}(\omega_n)\|f\|_{L^{2}(X,\omega\mu_{\varphi})},
\end{equation*}
for all $f\in \overline{L^{2}(X,\omega_n\mu_{\varphi})\cap R(-\Delta_{\varphi })}^{L^2}$.
\end{corollary}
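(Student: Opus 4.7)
By duality between $L^2(T^*X,\omega_n\mu_\varphi)$ and $L^2(T^*X,\omega_n^{-1}\mu_\varphi)$ via the unweighted pairing $\langle\cdot,\cdot\rangle_{L^2(\mu_\varphi)}$, it suffices to bound $|\langle\mathcal{R}_\varphi f,\vec g\rangle_{L^2(\mu_\varphi)}|$ by the right-hand side of the corollary for test pairs $f\in C_c^\infty(X)\cap R(-\Delta_\varphi)$ and $\vec g\in C_c^\infty(T^*X)$; the closure statement then follows by density.

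The key is the Littlewood-Paley representation
$$\langle\mathcal{R}_\varphi f,\vec g\rangle_{L^2(\mu_\varphi)}=-4\int_0^\infty\int_X\langle dP_tf,\partial_t\vec{P}_t\vec g\rangle_{T^*X}\,t\,d\mu_\varphi\,dt.$$
To derive it, I would start from the subordination $(-\Delta_\varphi)^{-1/2}=\int_0^\infty P_t\,dt$ together with the intertwining $d\circ(-\Delta_\varphi)^{-1/2}=(-\vec{\Delta}_\varphi)^{-1/2}\circ d$ (valid by spectral calculus, since $d\Delta_\varphi=\vec{\Delta}_\varphi d$ on functions) which, combined with Lemma \ref{lemma1}(b), yields $\langle\mathcal{R}_\varphi f,\vec g\rangle=\int_0^\infty\langle dP_tf,\vec g\rangle\,dt$. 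Then set $\Phi(t):=\int_X\langle dP_tf,\vec{P}_t\vec g\rangle\,d\mu_\varphi=\langle df,\vec{P}_{2t}\vec g\rangle$, which equals $\langle df,\vec g\rangle$ at $t=0$ and vanishes as $t\to\infty$. Integration by parts in $t$ produces $\int_0^\infty t\Phi'(t)\,dt=-\int_0^\infty\Phi(t)\,dt=-\tfrac12\langle\mathcal{R}_\varphi f,\vec g\rangle$. The product rule writes $\Phi'(t)=I(t)+J(t)$ with $I(t):=\int_X\partial_tP_tf\cdot d^*_\varphi\vec{P}_t\vec g\,d\mu_\varphi$ and $J(t):=\int_X\langle dP_tf,\partial_t\vec{P}_t\vec g\rangle\,d\mu_\varphi$, and both reduce to $-\int_X f\cdot P_{2t}\sqrt{-\Delta_\varphi}d^*_\varphi\vec g\,d\mu_\varphi$ via self-adjointness of $P_t,\vec{P}_t$ and the commutation $d^*_\varphi\sqrt{-\vec{\Delta}_\varphi}=\sqrt{-\Delta_\varphi}\,d^*_\varphi$ (which follows from $d^*_\varphi\vec{\Delta}_\varphi=\Delta_\varphi d^*_\varphi$ by spectral calculus); hence $I=J$, $\Phi'=2J$, and the representation follows.

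Applying Cauchy-Schwarz pointwise on $T^*X$ and using that $dP_tf$ and $\partial_t\vec{P}_t\vec g$ are components of $\overline{\nabla}P_tf$ and $\overline{\nabla}\vec{P}_t\vec g$ respectively,
$$|\langle dP_tf,\partial_t\vec{P}_t\vec g\rangle|\leq|dP_tf|\,|\partial_t\vec{P}_t\vec g|\leq|\overline{\nabla}P_tf|\,|\overline{\nabla}\vec{P}_t\vec g|.$$
Inserting this into the representation formula and invoking Theorem \ref{thm1} yields $|\langle\mathcal{R}_\varphi f,\vec g\rangle|\leq 4\cdot 20\,\widetilde{Q}_2(\omega_n)\,\|f\|_{L^2(\omega_n\mu_\varphi)}\,\|\vec g\|_{L^2(T^*X,\omega_n^{-1}\mu_\varphi)}$; taking the supremum over $\vec g$ in the unit ball and extending by density completes the proof. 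The main obstacle is the rigorous justification of the integration-by-parts chain: one must check $t\Phi(t)\to 0$ at both endpoints, Fubini exchanges in the double $t$-integrals, and the spectral-calculus commutation of $d^*_\varphi$ with $\sqrt{-\vec{\Delta}_\varphi}$ on a dense domain. These are routine given the self-adjointness framework of Section 2.1 and the test-function assumption on $(f,\vec g)$, but they are the delicate part of turning the bilinear embedding of Theorem \ref{thm1} into the operator bound.
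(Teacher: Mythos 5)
Your proposal is correct and follows essentially the same route as the paper: both reduce the corollary to the Littlewood--Paley representation $\langle\mathcal{R}_{\varphi}f,\vec{g}\rangle_{L^{2}(\mu_{\varphi})}=\pm4\int_{0}^{\infty}\langle dP_{t}f,\partial_{t}\vec{P}_{t}\vec{g}\rangle\,t\,dt$ and then conclude by pointwise Cauchy--Schwarz, Theorem \ref{thm1} and duality, giving the same constant $4\cdot 20=80$. Your derivation of the representation via subordination and a single integration by parts on $\Phi(t)=\langle dP_{t}f,\vec{P}_{t}\vec{g}\rangle$ is just a reorganization of the paper's computation with $h(t)=\langle\vec{P}_{t}\mathcal{R}_{\varphi}f,\vec{P}_{t}\vec{g}\rangle$ (note $\Phi=-\tfrac12 h'$), requiring the same decay and intertwining facts; your sign ($-4$) is in fact the consistent one, and the discrepancy with the paper's $+4$ is immaterial after Cauchy--Schwarz.
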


\begin{proof}
As we mentioned in the introduction, the idea is to represent the Riesz transform by using Poisson semigroups on functions and differential forms. Indeed, for every $f\in L^{2}(X,\omega_n\mu_{\varphi})\cap R(-\Delta_{\varphi }) $ and $\vec{g} \in C_{c}^{\infty }\left( T^{\ast }X,\omega_n^{-1}\mu_{\varphi}\right) $ we have the well known fact%
\begin{equation}
\int_{X}\left\langle \mathcal{R}_{\varphi}f\left( x\right) ,\vec{g} \left( x\right)
\right\rangle d\mu _{\varphi }\left( x\right) =4\int_{0}^{\infty
}\int_{X}\left\langle dP_{t}f\left( x\right) ,\frac{d}{dt}\vec{P}%
_{t}\vec{g} \left( x\right) \right\rangle d\mu _{\varphi }\left( x\right)
tdt.  \label{3}
\end{equation}
Assuming the claim (\ref{3}), Corollary \ref{coro} follows immediately from Theorem \ref{thm1}
and the Cauchy-Schwarz inequality. To prove the claim (\ref{3}), consider
the function%
\[
h \left( t\right) =\left\langle \vec{P}_{t}\mathcal{R}_{\varphi}f,%
\vec{P}_{t}\vec{g} \right\rangle _{L^{2}\left( \mu _{\varphi
}\right) }. 
\]
Since $\int_{X}\left\langle \mathcal{R}_{\varphi}f,\vec{g} \right\rangle
_{L^{2}\left( \mu _{\varphi }\right) }=\varphi \left( 0\right) ,$ it
suffices to show that%
\begin{equation}
h \left( 0\right)=\int_0^{\infty}\varphi^{\prime\prime}(t) tdt =4\int_{0}^{\infty }\left\langle dP_{t}f,\frac{d%
}{dt}\vec{P}_{t}\vec{g} \right\rangle _{L^{2}\left( \mu _{\varphi
}\right) }tdt  \label{4}
\end{equation}\\
In order to prove the first equality in (\ref{4}), it is enough to show that both $\varphi
\left( t\right) $ and $t\varphi ^{\prime }\left( t\right) $ tend to zero as $%
t\rightarrow \infty $. First, note that by Lemma \ref{lemma1}, $\vec{P}_{t}\mathcal{R}_{\varphi}f=\mathcal{R}_{\varphi}P_{t}f$. Therefore, by the $L^{2}$ contractivity of both $\mathcal{R}_{\varphi}$ and $\vec{P}_{t}$,
\begin{equation}
\left\vert h \left( t\right) \right\vert \leq \left\Vert P_{t}f\right\Vert
_{L^{2}\left( \omega _n\mu _{\varphi }\right) }\left\Vert \vec{g} \right\Vert
_{L^{2}\left(\omega^{-1} _n\mu _{\varphi }\right) }. \label{h(t)}
\end{equation}
Since $f\in R\left( -\Delta_{\varphi }\right) ,$ the spectral theorem gives that $P_{t}f%
\rightarrow 0$ in $L^{2}\left( \omega_n \mu _{\varphi }\right) $ as $t\rightarrow
\infty $.\\
Similarly, Lemma \ref{lemma1} gives%
\begin{eqnarray*}
h ^{\prime }(t) &=& 2\langle (-\Delta_{\varphi })^{1/2}\vec{P}_{t}d(-\Delta_{\varphi})^{-1/2}f,\vec{P}_{t}\vec{g} \rangle _{L^{2}(\mu_{\varphi }) } \\
&=& 2\left\langle P_{t}f,P_{t}d^{\ast }_{\varphi}\vec{g} \right\rangle
_{L^{2}\left( \mu _{\varphi }\right) },
\end{eqnarray*}%
therefore $\lim_{t\rightarrow +\infty }t\left\vert \varphi ^{\prime }\left(
t\right) \right\vert =0$ as before.\\
The second equality in (\ref{4}) can be
verified by a straightforward calculation, again with the help of Lemma \ref{lemma1}. Indeed,
\[h^{\prime\prime}(t)=2\left(\langle\frac{d}{dt}P_tf,P_td^{*}_{\varphi}\vec{g}\rangle +\langle P_tf,\frac{d}{dt}P_td^{*}_{\varphi}\vec{g}\rangle\right).\]
By Lemma \ref{lemma1}, 
\[\langle\frac{d}{dt}P_tf,P_td^*_{\varphi}\vec{g}\rangle=\langle dP_tf,\frac{d}{dt}\vec{P}_t\vec{g}\rangle,\]
and
\begin{eqnarray*}
\langle P_tf,\frac{d}{dt}P_td^*_\varphi\vec{g}\rangle &=& \langle P_tf,d^*_{\varphi}\frac{d}{dt}\vec{P}_t\vec{g}\rangle \\
&=& \langle dP_tf,\frac{d}{dt}\vec{P}_t\vec{g}\rangle.
\end{eqnarray*}
Thus, we get the desired result.
\end{proof}

\section{Bellman function}

\label{sec_bell}

As mentioned before, the main tool used to prove Theorem \ref{thm1} is a
particular Bellman function that is constructed explicitely. A substantial part of its origin lies in the seminal paper by Nazarov, Treil and Volberg \cite{NTV}. It has been developped in \cite{witt}, \cite{PW} and \cite{PD}, with the first explicit expression in \cite{PD}. Our construction differs from the one in \cite{PD} in that this construction is slightly shorter and and gives better (and explicit) numerical constants.\\
In fact, for any $Q \geq 1$, we can exhibit a function $B_Q$ in domain 
\[D_Q=\{\mathcal{X}:=(Z,H,\zeta ,\eta ,r,s) : \zeta^2 \leq Zr, \langle \eta, \eta \rangle \leq Hs, 1 \leq rs \leq Q\},\]
which is a subset of $\R _+ \times \R _+ \times \R \times \R^N \times \R _+ \times \R _+$. The function $B_Q$ is globally in $C^1$ and piecewise in $C^2$ such that
\begin{equation} \label{size}
0\leq B_{Q} \leq 80(Z+H);  
\end{equation}%
\begin{equation} \label{deriv}
-d^{2}B_{Q} \geq \frac{4}{Q}|d\zeta | |d\eta|; \text{ where $B_Q$ is in $C^2$.}
\end{equation}%
Furthermore, $B_Q$ is radial in $\zeta$ and $\eta$ in the sense that 
\[B_Q(Z,H,\zeta ,\eta ,r,s)=\overline{B}_Q(Z,H,|\zeta| ,|\eta |,r,s.)\]
Consequently, the domain of $\overline{B}_Q$ is defined accordingly in $\mathbb{R}^6$. Writing $\nu = |\eta| \in \mathbb{R}_+$ we have in addition
\begin{equation} \label{sign}
\partial _{\nu }\overline{B}_{Q} \leq 0.
\end{equation}

\begin{remark}
We use a Bellman function involving real variables. As a consequence, Theorem \ref{thm1} and Corollary \ref{coro} hold for real-valued functions and forms. One may find the corresponding estimates for complex-valued functions and forms by separating the real and imaginary parts.
\end{remark}

\begin{remark}
The property (\ref{deriv}) means that for all 6-tuple $\mathcal{X}=(Z,H,\zeta ,\eta ,r,s)$ in $D_Q$, we have 
\begin{equation*}
\left\langle -d^{2}B_{Q}d \mathcal{X},d \mathcal{X} \right\rangle \geq \frac{4}{Q}|d \zeta||d \eta| .
\end{equation*}
\end{remark}

Consider the Bellman functions $B_{Q}=B_{1}+B_{2}+B_{3}+B_{4}$ and $\overline{B}_{Q}=\overline{B}_{1}+\overline{B}_{2}+\overline{B}_{3}+\overline{B}_{4}$ of six variables such that

\begin{enumerate}
\item[$\bullet $] $B_{1}(Z,H,\zeta ,y,r,s)=Z-\dfrac{\zeta ^{2}}{r}+H-\dfrac{%
\langle \eta, \eta \rangle}{s},$

\item[$\bullet $] $B_{2}(Z,H,\zeta ,\eta ,r,s)=Z-\dfrac{\zeta ^{2}}{r}+H-%
\dfrac{\langle \eta, \eta \rangle}{s+\dfrac{M(r,s)}{Q^2} },$

\item[$\bullet $] $B_{3}(Z,H,\zeta ,\eta ,r,s)=Z-\dfrac{\zeta ^{2}}{r+\dfrac{N(r,s)}{Q^2} }%
+H-\dfrac{\langle \eta, \eta \rangle}{s},$\newline
where 
\begin{equation*}
M(r,s)=-\frac{4Q^2}{r}-rs^2+(4Q^2+1)s
\end{equation*}%
and 
\begin{equation*}
N(r,s)=-\frac{4Q^2}{s}-sr^2+(4Q^2+1)r.
\end{equation*}

\item[$\bullet$] $B_{4}=B_{41}+B_{42}+B_{43} \text{ with }$

\begin{itemize}
\item $B_{41}(Z,H,\zeta ,\eta ,r,s)=Z-\dfrac{\zeta ^{2}}{r+\dfrac{\widetilde{M}(r,s)}{Q} }+H-\dfrac{\langle \eta ,\eta\rangle}{s}$

\item $B_{42}(Z,H,\zeta ,\eta ,r,s)=Z-\dfrac{\zeta ^2}{r} +H-\dfrac{\langle \eta, \eta \rangle}{s+\dfrac{\widetilde{N}(r,s)}{Q}}$

\item $B_{43}(Z,H,\zeta ,\eta ,r,s)= \sup_{a>0} \left( Z-\dfrac{\zeta ^2}{r+a\dfrac{K(r,s)}{Q}}+H-\dfrac{\langle \eta ,\eta \rangle}{s+a^{-1}\dfrac{K(r,s)}{Q} } \right)$\newline
where\newline
$K(r,s)=\sqrt{Q}\sqrt{rs}-\dfrac{rs}{4}$,\newline
$\widetilde{M}(r,s)=-\dfrac{4Q}{s}-\dfrac{r^2s}{4Q}+(4Q+1)r$\newline
and\newline
$\widetilde{N}(r,s)=-\dfrac{4Q}{r}-\dfrac{s^2r}{4Q}+(4Q+1)s.$ 
\end{itemize}
\end{enumerate}

Next, we present properties on size and derivative estimates of the functions $M,N,K,\widetilde{M}
$ and $\widetilde{N}$, using the relation $1\leq rs \leq Q$ :\newline
\textbf{Functions $M$ and $N$:} 
\begin{equation*}
0\leq M\leq 5Q^2s\text{ and }-d^{2}M\geq r(ds)^{2},
\end{equation*}%
\begin{equation*}
0\leq N\leq 5Q^2r\text{ and }-d^{2}N\geq s(dr)^{2}.
\end{equation*}%
\newline
\textbf{Function $K$:} 
\begin{equation*}
0\leq K\leq Q\text{ and }-d^{2}K\geq \dfrac{1}{4}|dsdr|.
\end{equation*}%
\newline
\textbf{Functions $\widetilde{M}$ and $\widetilde{N}$:} 
\begin{equation*}
0\leq \widetilde{M}\leq 5Qr\text{ and }-d^{2}\widetilde{M}\geq \frac{|dsdr|%
}{s},
\end{equation*}%
\begin{equation*}
0\leq \widetilde{N}\leq 5Qs\text{ and }-d^{2}\widetilde{N}\geq \frac{|dsdr|%
}{r}.
\end{equation*}%

Now, define $\Pi=\lbrace \frac{K}{Q} = \frac{\zeta s}{|\eta|} \rbrace \cup \lbrace \frac{K}{Q} =  \frac{|\eta| r}{\zeta}\rbrace $. The function $B_{Q}$ satisfies the following :
\begin{lemma} \label{lemma3}
For every $(Z,H,\zeta ,\eta ,r,s)$ in $D_Q$,
\begin{itemize}
\item[1)] $0 \leq B_{Q} \leq 6(Z+H)$;
\item[2)] $\partial _{\nu }\overline{B}_{Q,} \leq 0$;
\item[3)] If $(Z,H,\zeta ,\eta ,r,s) \in D_Q \setminus \Pi$, then we have 
$-d^{2}B_{Q} \geq \frac{4}{Q}|d\zeta | |d\eta|$.
\end{itemize}
\end{lemma}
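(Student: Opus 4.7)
The plan is to verify the three items in order of increasing difficulty.

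\textbf{Items (1) and (2).} Each of the six summands of $B_Q$ has the form
\[
Z - \frac{\zeta^2}{\Phi} + H - \frac{\langle\eta,\eta\rangle}{\Psi}
\]
with $\Phi \geq r$ and $\Psi \geq s$, because the displayed size bounds guarantee $M, N, K, \widetilde M, \widetilde N \geq 0$ on $D_Q$, so every perturbation added to $r$ or $s$ in a denominator is non-negative. Combined with the constraints $\zeta^2 \leq Zr$ and $\langle\eta,\eta\rangle \leq Hs$ defining $D_Q$, this gives $0 \leq B_i \leq Z+H$ for every summand; for $B_{43}$ the lower bound is obtained by specializing $a=1$, and the upper bound $\leq Z+H$ holds for every $a>0$, hence for the supremum. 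Summing the six contributions yields item (1). Item (2) is immediate: every summand depends on $\eta$ only through $-\nu^2/\Psi_i$ with $\Psi_i>0$, so $\partial_\nu \overline B_i = -2\nu/\Psi_i \leq 0$, and a supremum of non-increasing functions of $\nu$ is non-increasing.

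\textbf{Item (3).} This is the core computation. For the five summands $B_1, B_2, B_3, B_{41}, B_{42}$ I use the identity
\[
-d^2\!\Bigl(Z - \frac{\zeta^2}{\Phi}\Bigr) \;=\; 2\Bigl(\frac{d\zeta}{\sqrt\Phi} - \frac{\zeta\, d\Phi}{\Phi^{3/2}}\Bigr)^2 - \frac{\zeta^2\, d^2\Phi}{\Phi^2},
\]
together with its $\eta$-analogue. The first summand is a square and the displayed derivative bounds $-d^2M\geq r(ds)^2$, $-d^2N\geq s(dr)^2$, $-d^2\widetilde M\geq |dr\,ds|/s$, $-d^2\widetilde N\geq |dr\,ds|/r$ render the second non-negative, since each gives $-d^2\Phi\geq 0$. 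These five pieces thus contribute to $-d^2 B_Q$ a sum of squares plus supplementary non-negative control in the $(dr,ds)$-directions.

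For the sixth summand, on $D_Q\setminus\Pi$ the first-order condition $\partial_aF=0$ admits the unique positive solution $a^\ast=(|\eta|r-\zeta K/Q)/(\zeta s-|\eta|K/Q)$, and substitution into the supremum produces the explicit closed form
\[
B_{43} \;=\; Z+H \;-\; \frac{s\,\zeta^2 + r\,\langle\eta,\eta\rangle - 2\zeta\,|\eta|\,K/Q}{\,rs-(K/Q)^2\,}.
\]
Differentiating this closed form twice in $(\zeta,\eta)$ generates the mixed contribution $-4(K/Q)/V\cdot d\zeta\,d\eta$ in $-d^2 B_{43}$, where $V = rs-(K/Q)^2$. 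Combining this with the $(d\zeta)^2$- and $(d\eta)^2$-pieces from \emph{all six summands} (not $B_{43}$ alone) via AM--GM, and simplifying with $1\leq rs\leq Q$ together with the specific form $K=\sqrt Q\sqrt{rs}-rs/4$, one arrives at exactly the lower bound $(4/Q)|d\zeta||d\eta|$ for $-d^2 B_Q$.

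\textbf{Main obstacle.} The bookkeeping in item (3) is by far the hardest step. The Hessians of the six summands yield a long quadratic form in the differentials $(d\zeta,d\eta,dr,ds)$, and one must verify that all $(dr)^2$, $(ds)^2$ and mixed $dr\,ds$, $dr\,d\zeta$, $ds\,d\eta$ contributions combine into a non-negative residual---absorbed precisely by the $-d^2 M, -d^2 N, -d^2\widetilde M, -d^2\widetilde N$ terms of $B_2, B_3, B_{41}, B_{42}$---while the $(d\zeta,d\eta)$-block leaves exactly $(4/Q)|d\zeta||d\eta|$ after AM--GM. The specific constants $1/Q$, $1/Q^2$ and $1/4$ in the perturbations are tuned so that this balance works. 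Excluding the hypersurface $\Pi$ is essential, because there the optimizer $a^\ast$ in $B_{43}$ collapses to $0$ or $\infty$ and $B_Q$ is only $C^1$-glued across that locus.
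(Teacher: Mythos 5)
Your items (1) and (2) are essentially fine; in fact your argument for (2) --- that $B_{43}$ is a supremum over $a>0$ of functions each non-increasing in $\nu$, hence itself non-increasing --- is cleaner than the paper's envelope computation at $a=a_m$. The gap is in item (3), and it is not merely unfinished bookkeeping: you have identified the wrong source for the main term $\frac{4}{Q}|d\zeta||d\eta|$. In the paper (following Nazarov--Treil--Volberg and Petermichl--Wittwer) that term is produced by $B_1$ alone, via AM--GM on its two perfect squares, $-d^2B_1=\frac{2\zeta^2}{r}\left|\frac{d\zeta}{\zeta}-\frac{dr}{r}\right|^2+\frac{2}{s}\left|d\eta-\frac{\eta}{s}ds\right|^2\ge \frac{4}{\sqrt{rs}}\left|d\zeta-\frac{\zeta}{r}dr\right|\left|d\eta-\frac{\eta}{s}ds\right|\ge\frac{4}{Q}\left|d\zeta-\frac{\zeta}{r}dr\right|\left|d\eta-\frac{\eta}{s}ds\right|$, and the triangle inequality then converts the shifted differentials into $|d\zeta||d\eta|$ at the cost of three error terms of the shapes $\frac{|\eta|}{s}|ds|\left|d\zeta-\frac{\zeta}{r}dr\right|$, $\frac{\zeta}{r}|dr|\left|d\eta-\frac{\eta}{s}ds\right|$ and $\frac{\zeta|\eta|}{rs}|dr||ds|$. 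The entire purpose of $B_2,B_3$ (through the concavity of $M,N$ and Lemma~\ref{lemma2}) and of $B_4$ (through $\widetilde M,\widetilde N,K$) is to dominate precisely these three error terms after the weights $C_i$ are chosen; that domination is the actual content of the lemma, and your proposal defers it to an unexecuted ``verification.''

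Your alternative mechanism --- extracting $\frac{4}{Q}|d\zeta||d\eta|$ from the mixed $d\zeta\,d\eta$ term of the closed form of $B_{43}$ paired with diagonal terms of the other summands --- fails for two concrete reasons. First, that closed form is only valid where the critical point $a^\ast$ is positive and finite, i.e.\ on the region where $|\eta|r-\zeta K/Q>0$ \emph{and} $\zeta s-|\eta|K/Q>0$; on the rest of $D_Q\setminus\Pi$ the supremum degenerates to $Z+H-\zeta^2/r$ or $Z+H-\nu^2/s$, which contain no $d\zeta\,d\eta$ term at all, so your source of the main term disappears exactly where $B_{41}$ and $B_{42}$ are designed to take over (they supply the $|dr||ds|$ compensation there, not the main term). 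Second, even on the good region one cannot restrict the Hessian to its $(d\zeta,d\eta)$-block and spend the diagonal entries twice: the $(d\zeta)^2$ mass you propose to pair with the mixed term of $B_{43}$ is already committed to dominating the $d\zeta\,dr$ cross terms of the other summands --- which is exactly what keeping the perfect squares $\left|d\zeta-\frac{\zeta}{r}dr\right|^2$ intact accomplishes. To repair the argument, follow the paper's split: main term from $B_1$; the two single-cross error terms from $B_2,B_3$; and the $|dr||ds|$ error term from $B_{41}$, $B_{42}$ or $B_{43}$ according to whether $K/Q$ exceeds $|\eta|r/\zeta$, exceeds $\zeta s/|\eta|$, or lies below both.
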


\begin{proof}
\begin{enumerate}
\item The result follows directly due to the construction of $B_{Q}$ as well as the hypothesis on $D_Q$.

\item For more convenience, we will deal with each function $\overline{B}_i, i=1,\dots,4$ separately.\\
It is clear that the derivatives in the variable $\nu$ of $\overline{B}_1$, $\overline{B}_{2}$, $\overline{B}_{3}$, $\overline{B}_{41}$ and $\overline{B}_{42}$ are negative by straightforward computations. It remains to study $\overline{B}_{43}$. Let us rewrite it in the form
\[\overline{B}_{43}(Z,H,|\zeta| ,|\eta| ,r,s)=Z+H-\sup_{a>0} \beta (a,Z,H,|\zeta |,|\eta| ,r,s),\]
with
\[\beta(a,Z,H,|\zeta| ,|\eta| ,r,s)=\dfrac{\zeta ^2}{r+aK(r,s)/Q}+\dfrac{\nu^2}{s+a^{-1}K(r,s)/Q}.\]
The function $\beta$ is continuously differentiable in $a>0$ and
\[\dfrac{\partial \beta}{\partial a}=-\dfrac{\zeta ^2 K/Q}{(r+aK/Q)^2}+\dfrac{\nu^2 K/Q}{(as+K/Q)^2},\]
which yields to
\[\dfrac{\partial \beta}{\partial a}=0 \Leftrightarrow a=\dfrac{Qr\nu-K\zeta}{Qs\zeta-K\nu},\]
provided this fraction is finite and non-null.  \\
Let $a_m:=\frac{Qr\nu-K\zeta}{Qs\zeta-K\nu}$. If both numerator and denominator are positive, $\partial \beta/ \partial a$ changes sign from positive to negative. Which means that the extremum is a maximum and it is attained at $a_m$. In this case,
\[\overline{B}_{43}=Z-\dfrac{\zeta ^2}{r+\dfrac{K(r,s)}{Q}\dfrac{Qr\nu - \zeta K(r,s)}{Qs\zeta - \nu K(r,s)}}+H-\dfrac{ \nu ^2}{s+\dfrac{K(r,s)}{Q}\dfrac{Qs\zeta - \nu K(r,s)}{Qr\nu - \zeta K(r,s)}}.\]
If $a_m$ is respectively null or infinite, then $\overline{B}_{43}$ is respectively $Z+H -\dfrac{\zeta ^2}{r}$ and $Z+H-\dfrac{\nu^2}{s}$. \\
To compute $\partial_{\nu}\overline{B}_{43}$, consider a one-parameter family of functions
\[ \overline{B}_{43}^a(Z,H,|\zeta| ,|\eta| ,r,s)=Z+H-\beta(a,Z,H,|\zeta| ,|\eta| ,r,s).\]
When $a_m$ is strictly positive and finite, it is clear that $\overline{B}_{43}= \overline{B}_{43}^{a_m}$.  \\
By chain rule
\[\dfrac{\partial \overline{B}_{43}}{\partial {\nu}} =  \left.\dfrac{\partial \overline{B}_{43}^a}{\partial a}\right| _{a=a_m}\cdot\dfrac{\partial a}{\partial \nu} + \left.\dfrac{\partial \overline{B}_{43}^a}{\partial \nu}\right| _{a=a_m}.\]
But $\left.\dfrac{\partial B_{43}^a}{\partial a}\right| _{a=a_m}=0$, since $\beta$ attains its maximum at $a_m$. Consequently
\begin{eqnarray*}
\dfrac{\partial \overline{B}_{43}}{\partial {\nu}} &=&  \left.\dfrac{\partial \overline{B}_{43}^a}{\partial \nu}\right| _{a=a_m} \\
&=&  -\dfrac{2 \nu }{s+a^{-1}_mK/Q}\\
& \leq & 0.
\end{eqnarray*}
When $a_m$ is respectively null or infinite, then $\dfrac{\partial \overline{B}_{43}}{\partial {\nu}}$ is respectively null or equal to $-\dfrac{2\nu }{s} \leq 0$ (see \cite{PD} for more details on the behavior of $B_{43}$), which finishes the proof of (\ref{sign}).\\
It is essential that $\overline{B}_{43}$ is concave as the infimum of a family of concave functions, since $\beta$ is convex for all $a>0$.

\item First, let's point out that the function $B_{Q}$ is globally $C^1$ and $C^{2}$ everywhere except on the set $\Pi$. Indeed, to prove this assertion, we refer the reader to the recent paper of Petermichl and Domelevo \cite[Lemma 3]{PD}, where the authors have checked that the first partial derivatives of  $B_{43}$ are continuous throughout three regions, namely $R_1$ where $|\eta|r-\zeta \dfrac{K}{Q}>0$ and $\zeta s-|\eta|\dfrac{K}{Q} >0$, $R_2$ where $|\eta|r-\zeta \dfrac{K}{Q}>0$ and $\zeta s-|\eta|\dfrac{K}{Q} \leq 0$ and $R_3$ where $\zeta s-|\eta|\dfrac{K}{Q} > 0$ and $|\eta|r-\zeta \dfrac{K}{Q} \leq 0$.\\
Now, to prove the derivative property, we will study as before the Hessian of each of $B_{1}$, $B_{2}$, $B_{3}$ and $B_{4}$ separately and then sum the
results to obtain the desired estimate.\newline
\textbf{Case of $B_{1}$:} A direct computation of the Hessian
yields 
\begin{eqnarray*}
-d^{2}B_{1} &=&\frac{2\zeta ^{2}}{r}\left\vert \frac{d\zeta }{\zeta }-\frac{%
dr}{r}\right\vert ^{2}+\frac{2}{s}\langle {d\eta }-\frac{\eta}{s}ds,{d\eta }-\frac{\eta}{s}ds\rangle \\
&\geq  & \frac{4\zeta }{\sqrt{rs}}\left\vert \frac{d\zeta }{\zeta }-%
\frac{dr}{r}\right\vert \left\vert {d\eta }- \frac{\eta}{s}ds%
\right\vert  \\
&\geq & \frac{4 }{Q} \left\vert {d\zeta }-\frac{{\zeta }}{r}dr%
\right\vert \left\vert {d\eta }-\frac{{\eta }}{s}ds\right\vert .
\end{eqnarray*}%
\newline
\textbf{Case of $B_{2}$ and $B_{3}$:} We can deduce from the Hessian of $%
B_{2}$ that of $B_{3}$ simply by replacing the variables $\zeta $ by $\eta $ and $r$ by $s$%
. As in the previous case, the Hessian of the first part of $B_{2}$ is
bounded from below by $\frac{2\zeta ^{2}}{r}\left\vert \frac{d\zeta }{\zeta }%
-\frac{dr}{r}\right\vert ^{2}$. As for the second part, we use the following
lemma:

\begin{lemma}
\label{lemma2} If a function $B$ has the form 
\begin{equation*}
B(F,f,w,M)=F-\frac{f^2}{w+M}
\end{equation*}
and $M$ is a function depending on the variables $w$ and $v$, then if we write $H=B\circ M$, we have
\begin{equation*}
-d^2H \geq -\frac{\partial B}{\partial M}d^2M.
\end{equation*}
\end{lemma}

Cosequently, the Hessian of the second part of $B_{2}$ is bounded from below by $\dfrac{\langle \eta , \eta \rangle}{36Q^{2}s^{2}}r(ds)^{2}$.\newline
Finally, 
\begin{eqnarray*}
-d^{2}B_{2} &\geq & \frac{2\zeta ^{2}}{r}\left\vert \frac{d\zeta }{\zeta }-%
\frac{dr}{r}\right\vert ^{2}+\frac{\langle \eta , \eta \rangle}{36Q^{2}s^{2}}r(ds)^{2} \\
&\geq & \dfrac{\sqrt{2}}{3}\frac{ |\eta | r}{Qrs}|ds|\left\vert {d\zeta }-\frac{\zeta %
}{r}dr\right\vert .
\end{eqnarray*}%
Analogously,%
\begin{equation*}
-d^{2}B_{3}\geq \dfrac{\sqrt{2}}{3}\frac{\zeta  s}{Qrs}|dr| \left\vert d\eta -%
\frac{\eta}{s}ds\right\vert .
\end{equation*}%
\textbf{Case of $B_{4}$:} Once again, we will study separately $B_{41}$, $%
B_{42}$ and $B_{43}$.\newline
We saw in the previous point that when $a_m$ is strictly positive and finite, $K$ is small, id est, $\frac{K}{Q} \leq \frac{|\eta| r}{\zeta}$ and $\frac{K}{Q} \leq \frac{\zeta s}{|\eta|}$. Functions $B_{41}$ and $B_{42}$ are introduced to deal with concavity for other $K$'s. These two functions are treated the same way by replacing $Z,\zeta $ and $r$ repectively by $H,\eta $ and $s$ so we will only focus on $B_{41}$. The idea is to apply chain rule and use Lemma \ref{lemma2} again, knowing that $K\geq Q\frac{|\eta| r}{\zeta }$. \\
More precisely, let 
\[H(\zeta,\eta,r,s)=S(\zeta,\eta,r,s,\widetilde{M}(r,s))=\dfrac{-\zeta^2}{r+\dfrac{\widetilde{M}(r,s)}{Q}}-\dfrac{\langle \eta ,\eta \rangle}{s}.\]
Notice that we omit the variables $Z$ and $H$ because they do not play a role for the Hessian. \\
One checks by calculation of Hessian of $S$ that $-S$ is convex, which means that $-d^2S \geq 0$. By introducing $d_r\widetilde{M}=\dfrac{\partial \widetilde{M} }{\partial r}dr$, $d_s\widetilde{M}=\dfrac{\partial \widetilde{M} }{\partial s}ds$ and applying chain rule we obtain
\begin{eqnarray*}
& &\langle -d^2H (d\zeta,d\eta,dr,ds),(d\zeta,d\eta,dr,ds) \rangle = \\
& &\langle -d^2S(d\zeta ,d\eta ,dr,ds, d_r \widetilde{M}+d_s\widetilde{M}),(d\zeta ,d\eta ,dr,ds,d_r\widetilde{M}+d_s \widetilde{M}) \rangle\\
& & -\dfrac{\partial S}{\partial \widetilde{M}}  \langle d^2\widetilde{M}(dr,ds),(dr,ds)\rangle \label{chainrule}\\
& & \geq -\dfrac{\partial S}{\partial \widetilde{M}}  \langle d^2\widetilde{M}(dr,ds),(dr,ds)\rangle \\
&& \geq \frac{1}{36Q} \frac{\zeta |\eta |}{rs} |ds dr |,
\end{eqnarray*}
because $\dfrac{\partial S}{\partial \widetilde{M}}=\dfrac{\zeta ^2}{Q\left(r+\frac{\widetilde{M}}{Q}\right)^2} \geq \dfrac{\zeta ^2}{36Qr^2}$ and 
$-d^2\widetilde{M} \geq \dfrac{ |ds dr |}{s}$. Moreover, we use the fact that $1 \geq \dfrac{K}{Q} \geq \dfrac{|\eta | r}{\zeta } $.\\
Finally, we obtain the following estimates 
\begin{equation*}
-d^{2}B_{41}\geq \dfrac{1}{36} \frac{\zeta |\eta | }{Qrs}|dsdr|\text{ and }%
-d^{2}B_{42} \geq \dfrac{1}{36} \frac{\zeta |\eta |}{Qrs}|dsdr|.
\end{equation*}%
\newline
We will estimate the Hessian of $B_{43}$ as before by using chain rule to
get rid of the variable $K$. For this purpose, consider $B(\zeta, \eta,r,s,a,K)=\dfrac{-\zeta ^2}{r+a\frac{K}{Q}}+\dfrac{-\langle \eta , \eta \rangle}{s+a^{-1}\frac{K}{Q}}$ and then $H(\zeta, \eta,r,s,K)=B \circ U(\zeta, \eta,r,s,K)$, where $U(\zeta, \eta,r,s,K)=(\zeta, \eta,r,s,a_m(\zeta, \eta,r,s,K),K)$ .\\
As before,
\begin{eqnarray*}
-d^2H=-d^2B + \dfrac{\partial B}{\partial a}| _{a=a_m} \times (-d^2a_m).
\end{eqnarray*}
Recall that $\dfrac{\partial B}{\partial a}=0$ when $a=a_m$. Moreover, by a simple calculation of the Hessian of $B$, we can check that $-d^2B \geq 0$ and so is $-d^2H$. Next, write $L(\zeta, \eta,r,s)=H(\zeta, \eta,r,s,K(r,s))$. Thus,
\begin{eqnarray*}
-d^2L & = & -d^2H + \dfrac{\partial H}{\partial K} \times (-d^2K) \\
& \geq & \frac{1}{Q}\left( \dfrac{a_m\zeta ^2}{(r+a_m\frac{K}{Q})^2}+\dfrac{a_m^{-1}\langle \eta , \eta \rangle }{(s+a_m^{-1}\frac{K}{Q})^2} \right)\times \dfrac{1}{4}|drds| \\
& \geq & \dfrac{1}{4Q}\dfrac{2\zeta |\eta |}{(r+a_m\frac{K}{Q})(s+a_m^{-1}\frac{K}{Q})}|drds| \\
& \geq & \dfrac{1}{8Q}\dfrac{\zeta |\eta |}{rs}|drds|.
\end{eqnarray*}
In the last inequality, we used the fact that when $K$ is small, namely $\dfrac{K}{Q} \leq \dfrac{|\eta |r}{\zeta}$ and $\dfrac{K}{Q} \leq \dfrac{\zeta s}{|\eta |}$, we have $a_m\dfrac{K}{Q} \leq r$ and $a_m ^{-1} \dfrac{K}{Q} \leq s$. The proof of this assertion may be found in \cite{NTV} and \cite{PW}.
\newline
Consequently, we obtain that 
\begin{equation*}
-d^{2}B_{43}\geq  \frac{\zeta |\eta |}{8Qrs}|dsdr|.
\end{equation*}%

\textbf{Conclusion:} In order to finish the proof, it suffices to choose
constants $C_1=1$, $C_2=C_3= \frac{\sqrt2}{3}$ and $C_4=\frac{288}{13}$ in such a way that all terms of $%
-d^{2}B_{Q}=-d^{2}(C_{1}B_{1}+C_{2}B_{2}+C_{3}B_{3}+C_{4}B_{4})$ vanish except
for the term $\dfrac{4}{Q}|d\zeta || d\eta | $.
\end{enumerate}
\end{proof}

As mentioned earlier, $B_{Q}$ fails to be $C^2$ everywhere. We can add smoothness by taking convolutions with mollifiers: for a fixed compact $\mathcal{K}$ in the interior of $\overline{D}_Q$, choose $\varepsilon >0$ such that $\varepsilon <dist(\mathcal{K},\partial \overline{D}_Q)$. Consider $\overline{B}_{\varepsilon,Q}(\overline{\mathcal{X}})=\overline{B}_{Q}*\frac{1}{\varepsilon^{6}}\psi(\frac{\mathcal{\overline{X}}}{\varepsilon})$, where $\psi$ is a bell-shaped infinitely differentiable
function with compact support in the unit ball of $\mathbb{R}^{6}$.\newline
The resulting functions $B_{\varepsilon ,Q}$ and $\overline{B}_{\varepsilon ,Q}$ are clearly smooth and satisfy the following properties
\begin{itemize}
\item[1')] $0 \leq B_{\varepsilon,Q} \leq 80 (1+\varepsilon)(Z+H)$;
\item[2')] $\partial _{\nu}\overline{B}_{\varepsilon ,Q} \leq 0$;
\item[3')] $-d^{2}B_{\varepsilon,Q} \gtrsim \frac{4}{Q}|d\zeta | |d\eta|$ with an implicit constant depending on $\varepsilon$.
\end{itemize}

\begin{proof}
\begin{itemize}
\item[1')] Recall that the original function satisfies $0 \leq B_{Q} \leq 6(Z+H)$. Thus, it is easy to see that size property of the new weighted and mollified function $B_{\varepsilon,Q}$ changes only by a factor depending on the distance between the compact $\mathcal{K}$ and $\partial D_Q$, as well as the sum of weights $C_i$, $i\in \{1, \cdots , 4\}$. 
\item[2')] The non-positivity of $\partial _{\nu}\overline{B}_{\varepsilon ,Q}$ is preserved because the function $\overline{B}_{Q}$ is globally $C^1$ and satisfies 
\[\partial _{\nu}\overline{B}_{\varepsilon ,Q}(\mathcal{\overline{X}}) = \partial _{\nu}\overline{B}_{Q} * \psi _{\varepsilon}(\mathcal{\overline{X}}), \]
for $\mathcal{\overline{X}}=(Z,H,|\zeta| ,\nu ,r,s)$.\\
Since $\partial _{\nu}\overline{B}_{Q}$ is negative and $\psi _{\varepsilon}$ is positive, we obtain the result.
\item[3')] The statement is true because $B_{Q}$ is $C^1$ and we integrate over a compact set. Moreover, the second order derivatives exist almost everywhere (because $\Pi$  is of
measure zero) and are locally integrable, which means they coincide with the second order distributional derivatives. One can find more about this procedure in several previous texts on Bellman functions.
\end{itemize}
\end{proof}

\section{Proof of the main result}

\label{sec_main_result}

This section is devoted to the proof of Theorem \ref{thm1}. \newline
First of all, let's fix $(x,t) \in X \times \R _+$ and define $\widetilde{B} _{\varepsilon ,Q} :\R _+ \times \R _+ \times \R \times T^*X\times\R _+ \times\R _+ \rightarrow \R_+$ such that
\[\widetilde{B} _{\varepsilon ,Q}(Z,H,\zeta ,\eta ,r,s)=B_{\varepsilon ,Q}(Z,H,|\zeta| ,|\eta|_{T^*_xX} ,r,s).\]
Let us also define  for a certain $t_0 >0$ small enough (we will see later how small it should be) the vector
\[v(x,t+t_0)=\left( P_{t+t_0}|f|^{2}\omega_n(x),P_{t+t_0}|\vec{g}|^{2}_{T^*_xX}%
\omega_n^{-1}(x),P_{t+t_0}f(x),\vec{P}_{t+t_0}\vec{g}(x),P_{t+t_0}\omega_n^{-1}(x),P_{t+t_0}%
\omega_n(x)\right) \]
and in parallel
\[\overline{v}(x,t+t_0)=\left( P_{t+t_0}|f|^{2}\omega_n(x),P_{t+t_0}|\vec{g}|^{2}_{T^*_xX}%
\omega_n^{-1}(x),P_{t+t_0}f(x),|\vec{P}_{t+t_0}\vec{g}(x)|_{T^*_xX},P_{t+t_0}\omega_n^{-1}(x),P_{t+t_0}%
\omega_n(x)\right) \]
where we recall that $P_t$ and $\vec{P}_{t}$ stand
for the weighted Poisson extensions. It is important to mention that $v(x,t+t_0) \in D_Q$ and $(x,t)\mapsto v(x,t)$ maps compacts in $X \times \R_+$ to compacts in $D_Q$. Indeed, the following inequalities
\[|P_{t+t_0}f|^2 \leq P_{t+t_0}\left( |f|^{2}\omega_n \right) P_{t+t_0}\omega_n^{-1} \text{ and } |\vec{P}_{t+t_0}\vec{g}|^2_{T^*_xX} \leq P_{t+t_0}\left(|\vec{g}|^{2}_{T^*_xX}  \omega_n^{-1}\right)P_{t+t_0}\omega_n \]
are true by Lemma \ref{lemma1}. It is also clear that $P_{t+t_0}\omega_n(x)P_{t+t_0}\omega_n^{-1}(x)\leq Q$ by the very definition of $Q$. It remains to show that it is greater than $1$.\\
Since the semigroup $P_t$ is Markovian, $P_t 1 = 1$ by \cite{Bakry}. Thus
\begin{eqnarray*}
1 &=&  \int_X p_{t+t_0}^{1/2}(x,y)\omega_n^{1/2}(y)p_{t+t_0}^{1/2}(x,y)\omega_n^{-1/2}(y)d \mu _{\varphi}(y) \\
& \leq & \left( P_{t+t_0} \omega_n (x) \right) ^{1/2} \times \left( P_{t+t_0}\omega _n^{-1} (x) \right) ^{1/2}.
\end{eqnarray*}
Besides, the mapping property holds because $v$ is a continuous function. \\
Next, define 
\[b_{\varepsilon}(x,t+t_0)=\widetilde{B} _{\varepsilon ,Q} (v(x,t+t_0))\]
and consider the operators%
\begin{equation*}
\Delta_{\varphi,t}=\partial^2_{tt}+\Delta_{\varphi} \text{ and } \vec{\Delta}_{\varphi,t}%
=\partial^2_{tt}+\vec{\Delta_{\varphi}}.
\end{equation*}
The fact that $\widetilde{B} _{\varepsilon ,Q} $ is radial allows us to define the Bellman function on manifolds. Our goal is to find a link between $\Delta_{\varphi,t}b_{\varepsilon}$ and $d^2\widetilde{B} _{\varepsilon ,Q} $ and then
estimate the integral%
\begin{equation*}
-\int \int \Delta_{\varphi,t}b_{\varepsilon}(x,t+t_0) d\mu_{\varphi}(x)t d t
\end{equation*}%
from below and above.

\subsection{Estimate from below}

%
%

\begin{proposition} 
\label{below} Suppose that $\Ric _{\varphi} \geq 0$. Then for all $(x,t)\in X\times \R_+$,
\begin{equation*}
-\Delta_{\varphi,t}b_{\varepsilon}(x,t+t_0)\geq \frac{4}{Q}|\overline{\nabla} P_{t+t_0}f(x)||\overline{%
\nabla} \vec{P}_{t+t_0}\vec{g}(x)|.
\end{equation*}
\end{proposition}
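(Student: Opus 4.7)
The plan is to compute $\Delta_{\varphi,t}b_\varepsilon(x,t+t_0)$ by the chain rule, separating first- and second-order contributions and bounding each using the structural properties of $\overline{B}_{\varepsilon,Q}$. Writing $b_\varepsilon = \overline{B}_{\varepsilon,Q}(Z,H,\zeta,\nu,r,s)$ with all six arguments scalar (in particular $\nu = |\vec{P}_{t+t_0}\vec{g}|_{T^*_xX}$), the scalar chain rule yields
\begin{equation*}
\Delta_{\varphi,t} b_\varepsilon \;=\; \sum_i (\partial_i \overline{B}_{\varepsilon,Q})\,\Delta_{\varphi,t} v_i \;+\; \sum_{i,j}(\partial_i\partial_j \overline{B}_{\varepsilon,Q})\langle \overline{\nabla}v_i, \overline{\nabla}v_j\rangle.
\end{equation*}
For the five scalar weighted Poisson extensions $Z,H,\zeta,r,s$ one has $\Delta_{\varphi,t}v_i = 0$, so only the $\nu$-term survives at first order.

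\textbf{Handling $\Delta_{\varphi,t}\nu$ via Bochner--Weitzenb\"ock.} Setting $\eta = \vec{P}_{t+t_0}\vec{g}$ and using $(\partial_{tt} + \vec{\Delta}_\varphi)\eta = 0$ together with the Bochner--Weitzenb\"ock identity under $\Ric_\varphi \geq 0$, one obtains
\begin{equation*}
\Delta_{\varphi,t}|\eta|^2 \;\geq\; 2|\overline{\nabla}\eta|^2.
\end{equation*}
On $\{\nu>0\}$, combining this with $\Delta_{\varphi,t}\nu^2 = 2\nu\Delta_{\varphi,t}\nu + 2|\overline{\nabla}\nu|^2$ and Kato's inequality $|\overline{\nabla}\nu| \leq |\overline{\nabla}\eta|$ gives $\Delta_{\varphi,t}\nu \geq \nu^{-1}(|\overline{\nabla}\eta|^2 - |\overline{\nabla}\nu|^2) \geq 0$. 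Coupled with property (2'), $\partial_\nu \overline{B}_{\varepsilon,Q} \leq 0$, this yields
\begin{equation*}
-(\partial_\nu \overline{B}_{\varepsilon,Q})\Delta_{\varphi,t}\nu \;\geq\; \frac{-\partial_\nu \overline{B}_{\varepsilon,Q}}{\nu}\bigl(|\overline{\nabla}\eta|^2 - |\overline{\nabla}\nu|^2\bigr).
\end{equation*}

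\textbf{Second order and cancellation.} For the Hessian sum I would exploit the radial structure $B_{\varepsilon,Q}(\ldots,\eta,\ldots) = \overline{B}_{\varepsilon,Q}(\ldots,|\eta|,\ldots)$. Decomposing the $\eta$-block of the vector Hessian into its radial and tangential parts produces the pointwise algebraic identity
\begin{equation*}
\langle d^2 B_{\varepsilon,Q}\cdot \overline{\nabla}v, \overline{\nabla}v\rangle \;=\; \langle d^2 \overline{B}_{\varepsilon,Q}\cdot \overline{\nabla}\bar{v}, \overline{\nabla}\bar{v}\rangle + \frac{\partial_\nu \overline{B}_{\varepsilon,Q}}{\nu}\bigl(|\overline{\nabla}\eta|^2 - |\overline{\nabla}\nu|^2\bigr),
\end{equation*}
where $\bar{v}$ carries $\nu$ in place of $\eta$. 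Adding this to the first-order bound above, the two tangential defects have identical magnitude and opposite sign, so they cancel exactly, leaving
\begin{equation*}
-\Delta_{\varphi,t} b_\varepsilon \;\geq\; \langle -d^2 B_{\varepsilon,Q}\cdot \overline{\nabla}v, \overline{\nabla}v\rangle \;\geq\; \frac{4}{Q}|\overline{\nabla}\zeta|\,|\overline{\nabla}\eta|
\end{equation*}
by property (3'). Since $\overline{\nabla}\zeta = \overline{\nabla}P_{t+t_0}f$ and $\overline{\nabla}\eta = \overline{\nabla}\vec{P}_{t+t_0}\vec{g}$, this is the proposition.

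\textbf{Main obstacle.} The hard part will be matching property (3'), phrased in terms of the vector-valued $|\overline{\nabla}\eta|$, against the scalar chain rule in $\nu$, which only sees $|\overline{\nabla}\nu| \leq |\overline{\nabla}\eta|$ with possibly strict inequality. The Bochner--Weitzenb\"ock bound and the radial Hessian decomposition each produce a defect of the form $\nu^{-1}(|\overline{\nabla}\eta|^2 - |\overline{\nabla}\nu|^2)$ with opposite sign; property (2') is precisely the monotonicity condition that aligns the two defects for an exact cancellation, with no slack to spare. Points where $\nu = 0$ are handled by continuity, using that the mollified $\overline{B}_{\varepsilon,Q}$ is smooth everywhere.
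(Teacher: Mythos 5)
Your argument is correct and is essentially the paper's own proof: the paper introduces the auxiliary quantity $F=-\Delta_{\varphi,t}\widetilde{B}_{\varepsilon,Q}(v)+\frac{\partial_{\nu}\overline{B}_{\varepsilon,Q}(\overline{v})}{\nu}\,\Ric_{\varphi}(\sharp\vec{P}_t\vec{g},\sharp\vec{P}_t\vec{g})$ and defers the computation to Carbonaro--Dragicevic, and that computation is precisely the chain-rule/Bochner--Weitzenb\"ock/Kato/radial-Hessian cancellation you carry out, with property (2') supplying the sign needed to discard both the curvature term and the tangential defect $\nu^{-1}(|\overline{\nabla}\eta|^2-|\overline{\nabla}\nu|^2)$. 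You have in fact written out more of the details (including the harmonicity of the five scalar components and the passage to local exponential coordinates implicit in applying (3') to $\overline{\nabla}v$) than the paper itself does.
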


\begin{proof}
Following \cite[Lemma 12]{DC} and \cite[Proposition 13]{DC} we use the function $B_{\varepsilon,Q}$ and the corresponding function $\overline{B}_{\varepsilon,Q}$ to define the quantity
\begin{eqnarray*}
F(x,t+t_0)&=&-\Delta_{\varphi,t} \widetilde{B} _{\varepsilon ,Q} (v(x,t+t_0)) \\
&& - \frac{\partial_{\nu}\overline{B} _{\varepsilon ,Q}(\overline{v}(x,t+t_0))}{|\vec{P}_t\vec{g}|_{T^*_xX}}\times (-\Ric_{\varphi}(\sharp \vec{P}_t\vec{g},\sharp \vec{P}_t\vec{g})),
\end{eqnarray*}
where $\sharp :T^*_xX \rightarrow T_xX$ is the sharp musical isomorphism. \\
Note that we have a different sign convention for $\Delta_{\varphi,t}$ and use concavity instead of convexity.\\
Since $-d^2 B _{\varepsilon,Q} \gtrsim \frac{4}{Q}|d\eta||d\zeta|$ , this implies
\[F(x,t+t_0) \gtrsim \frac{4}{Q} |\overline{\nabla}P_tf||\overline{\nabla}\vec{P}_t\vec{g}|.\]
Furthermore, since $\partial_{\nu}\overline{B}_{\varepsilon,Q} \leq 0$, we have
\[-\Delta_{\varphi,t} \widetilde{B} _{\varepsilon ,Q} (v(x,t+t_0)) \geq \frac{4}{Q} |\overline{\nabla}P_tf||\overline{\nabla}\vec{P}_t\vec{g}| .\]
The calculations used to compute $F$ are omitted since they follow exactly the same steps as in \cite{DC}, that is, computing $\Delta_{\varphi,t}\widetilde{B} _{\varepsilon ,Q} $ and writing $F$ in terms of its variables and their different derivatives by using the Bochner formula in \cite[eq. (0.3)]{bak}. To verify the desired inequality, one can use exponential local coordinates and inequality (\ref{deriv}), since the expression of $F$ holds pointwise.
\end{proof}

\subsection{Estimate from above}

In order to prove Theorem \ref{thm1}, it suffices to show that 
\begin{equation*}
-\int \int \Delta_{\varphi,t}b_{\varepsilon}(x,t+t_0) d\mu_{\varphi}(x)t d t \leq
C  \|f\|_{L^{2}(X,\omega_n\mu_{\varphi})} \|\vec{g}\|_{L^{2}(T^{*}X,\omega_n^{-1}%
\mu_{\varphi})}.
\end{equation*}
In fact, for a fixed point $o\in X$, $l>1$ and $s>0$ such that $t_0 \in (0,1/s)$, we have the following result:
\begin{proposition}
\label{above} Suppose that $\Ric_{\varphi}\geq 0$. Then for every $(x,t)$ in the compact set $K_{s,l}:= \overline{B(o,l)} \times [1/s,s]$ we have
\begin{equation*}
\varlimsup_{s \to \infty} \varlimsup_{l \to \infty}\int_{1/s}^s \int_{B(o,l)}
-\Delta_{\varphi,t}b_{\varepsilon}(x,t+t_0) d\mu_{\varphi}(x)t d t \leq
20(1+\varepsilon)(\|f\|^2_{L^{2}(X,\omega_n\mu_{\varphi})}+ \|\vec{g}%
\|^2_{L^{2}(T^{*}X,\omega^{-1}_n\mu_{\varphi})}).
\end{equation*}
\end{proposition}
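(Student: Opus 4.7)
The plan is to decompose $\Delta_{\varphi,t}=\partial_{tt}^2+\Delta_\varphi$ and to handle the temporal and spatial contributions to the integral separately. For the spatial piece $-\int_{1/s}^s\int_{B(o,l)}\Delta_\varphi b_\varepsilon(x,t+t_0)\,t\,d\mu_\varphi\,dt$, I would invoke the divergence identity \eqref{page4} on the finite ball $B(o,l)$, which rewrites the inner integral as a boundary flux across $\partial B(o,l)$. Because $f$ and $\vec g$ have compact support and the Poisson kernel $p_{t+t_0}$ (together with the corresponding one-form kernel) decays as $d(x,\mathrm{supp}\,f)\to\infty$ uniformly on $t\in[1/s,s]$, this flux vanishes as $l\to\infty$. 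Hence the spatial contribution drops out after the inner limit $l\to\infty$ is taken.

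For the temporal piece $-\int_{1/s}^s\int_X\partial_{tt}^2 b_\varepsilon(x,t+t_0)\,t\,d\mu_\varphi\,dt$, I would use Fubini and integrate by parts twice in $t$ to obtain, for each fixed $x$,
\begin{equation*}
\int_{1/s}^s -\partial_{tt}^2 b_\varepsilon(x,t+t_0)\,t\,dt
= b_\varepsilon(x,s+t_0)-b_\varepsilon(x,\tfrac1s+t_0)-s\,\partial_t b_\varepsilon(x,s+t_0)+\tfrac1s\,\partial_t b_\varepsilon(x,\tfrac1s+t_0).
\end{equation*}
Integrating over $X$ and passing to $s\to\infty$, the dominant contribution is the $b_\varepsilon(x,s+t_0)$ term. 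By the size property (1') and conservation of the Poisson semigroup ($\int_X P_th\,d\mu_\varphi=\int_X h\,d\mu_\varphi$ applied to $h=|f|^2\omega_n$ and $h=|\vec g|^2\omega_n^{-1}$), this term is bounded by $20(1+\varepsilon)(\|f\|_{L^2(X,\omega_n\mu_\varphi)}^2+\|\vec g\|_{L^2(T^*X,\omega_n^{-1}\mu_\varphi)}^2)$. The $-b_\varepsilon(x,\tfrac1s+t_0)$ term is non-positive since $b_\varepsilon\ge 0$, and so only sharpens the upper bound.

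The remaining two terms, $-s\,\partial_t b_\varepsilon(x,s+t_0)$ and $\tfrac1s\,\partial_t b_\varepsilon(x,\tfrac1s+t_0)$, require the most care. I would bound $|\partial_t b_\varepsilon|$ by a sum of the time derivatives $|\partial_t P_{t+t_0}h|$ applied to each of the six components of $v(x,t+t_0)$, using the chain rule and the boundedness of $\nabla B_{\varepsilon,Q}$ on the compact image of $v$. Using the subordination representation $\partial_tP_t=-(-\Delta_\varphi)^{1/2}P_t$ together with the $L^2$ spectral decay of $P_t$ on functions with integrable data, one shows that both $s\,\partial_tP_{s+t_0}h$ and $s^{-1}\partial_tP_{1/s+t_0}h$ integrate to $o(1)$ in $s$. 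Crucially, the cutoff $t_0>0$ keeps $v(x,t+t_0)$ in a compact subset of the interior of $D_Q$, so the mollified $B_{\varepsilon,Q}$ and all of its first partials remain continuous and bounded on the relevant image.

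The main obstacle is the rigorous control of the boundary derivative terms $s\,\partial_t b_\varepsilon(x,s+t_0)$ as $s\to\infty$, because naive size bounds only give $O(s)$ for $\partial_t b_\varepsilon$. The correct estimate must use the extra smoothing of the Poisson semigroup encoded in $\partial_tP_t=-(-\Delta_\varphi)^{1/2}P_t$, together with the compactness of the supports of $f$ and $\vec g$, to gain decay in $s$ that beats the linear growth. Once those four boundary terms are shown to contribute at most $20(1+\varepsilon)(\|f\|_{L^2(X,\omega_n\mu_\varphi)}^2+\|\vec g\|_{L^2(T^*X,\omega_n^{-1}\mu_\varphi)}^2)$ after the double limit $\varlimsup_{s\to\infty}\varlimsup_{l\to\infty}$, the proposition follows.
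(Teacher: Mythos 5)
Your treatment of the temporal part follows the paper's route (two integrations by parts in $t$, the size property for $b_\varepsilon(x,s+t_0)$ together with $L^1$-contractivity of $P_t$, and spectral/holomorphic-semigroup decay for the terms $s\,\partial_t b_\varepsilon$). One caveat: the image of $v$ over $X\times[1/s,s]$ does not remain in a fixed compact as $s\to\infty$, so ``boundedness of $\nabla B_{\varepsilon,Q}$ on the compact image of $v$'' is not available; the paper instead derives explicit pointwise bounds on each partial derivative of $B_Q$ (for instance $\vert\partial B/\partial\eta\vert\le 12\vert\eta\vert r$ on $D_Q$) and then controls the resulting $L^{\alpha}(\mu_\varphi)$ norms uniformly in $t$ by H\"older's inequality and semigroup contractivity.

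The genuine gap is in the spatial part. Converting $-\int_{B(o,l)}\Delta_\varphi b_\varepsilon\,d\mu_\varphi$ into a flux across $\partial B(o,l)$ and claiming it vanishes because $f$ and $\vec g$ are compactly supported and the Poisson kernel decays does not work: the Bellman vector $v$ also contains the components $P_{t+t_0}\omega_n$ and $P_{t+t_0}\omega_n^{-1}$, which are bounded below by positive constants everywhere (indeed $P_t\omega_n\ge n^{-1}P_t1=n^{-1}$), so $\nabla b_\varepsilon$ has no reason to decay on large spheres; moreover geodesic spheres need not be smooth hypersurfaces because of the cut locus, and no quantitative decay of $p_t$ or control of the surface measure of $\partial B(o,l)$ is available on a general complete manifold. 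The paper avoids all of this by multiplying by a smooth radial cutoff $\Lambda(\rho^2/l^2)$ (legitimate since $-\Delta_{\varphi,t}b_\varepsilon\ge0$ by Proposition \ref{below}), integrating by parts twice so that $\Delta_\varphi$ falls on the cutoff, and bounding $-\Delta_\varphi\Lambda(\rho^2/l^2)$ from below on the annulus via the weighted Laplacian comparison $\Delta_\varphi\rho\le C/\rho$ --- which is exactly where the hypothesis $\Ric_\varphi\ge0$ enters, and your proposal never invokes it. The size property then dominates $b_\varepsilon$ by the integrable function $20(1+\varepsilon)\bigl(P_{t+t_0}(\vert f\vert^2\omega_n)+P_{t+t_0}(\vert\vec g\vert^2\omega_n^{-1})\bigr)$, whose integral over $B(o,2l)\setminus B(o,l)$ tends to zero, and a comparison with the auxiliary function $R$ yields the matching upper bound. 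Without an argument of this type the spatial contribution is not shown to vanish.
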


\begin{proof}
Let $\rho (x,o)$ be the geodesic distance on $X$ between $o$ and $x$. Define $\Lambda \in C_c^{\infty}([0,\infty))$ be a decreasing function such that $0\leq \Lambda \leq 1$, $\Lambda =1$
in $[0,1]$ and $\Lambda =0$ in $[2,\infty)$. We are interested in the following composite function :
\begin{equation*}
\Lambda \left( \frac{\rho (x,o)^2}{l^2} \right), l>1.
\end{equation*}
Observe that this composite function is always positive and equals to $1$ when $\rho (x,o) <l$. Recall also that by Proposition \ref{below}, $-\Delta_{\varphi,t}b_{\varepsilon} \geq 0$ and so 
\[\int_{1/s}^s \int_{B(o,l)}
-\Delta_{\varphi,t}b_{\varepsilon}(x,t+t_0) d\mu_{\varphi}(x)t d t \leq \int_{1/s}^s \int_{X}
-\Delta_{\varphi,t}b_{\varepsilon}(x,t+t_0)\Lambda \left( \frac{\rho (x,o)^2}{l^2} \right) d\mu_{\varphi}(x)t d t.\]
To prove the lemma, we shall show that
\begin{eqnarray}
&& \varlimsup_{s \to \infty} \varlimsup_{l \to \infty}\int_{1/s}^s \int_{X} -\partial _{tt}^{2}b_{\varepsilon}(x,t+t_0)\Lambda \left( \frac{\rho (x,o)^2}{l^2} \right) d\mu_{\varphi}(x)t \deriv t   \notag \\ 
& \leq &20(1+\varepsilon)(\|f\|^2_{L^{2}(X,\omega_n\mu_{\varphi})}+ \|\vec{g}\|^2_{L^{2}(T^{*}X,\omega_n^{-1}\mu_{\varphi})}),\label{10}
\end{eqnarray}
and
\begin{equation}
\lim_{l \to \infty}\int_{1/s}^s \int_{X} -\Delta_{\varphi}b_{\varepsilon}(x,t+t_0)\Lambda \left( \frac{\rho (x,o)^2}{l^2} \right) d\mu_{\varphi}(x)t \deriv t =0. \label{11}
\end{equation}
We first prove (\ref{10}). An integration by parts in the variable $t$ gives%
\[
\int_{1/s}^{s}-\partial _{tt}^{2}b_{\varepsilon}\left( x,t+t_0\right) tdt=\frac{1}{s}\partial _{t}b_{\varepsilon}\left( x,\frac{1}{s}+t_0%
\right)-s\partial_{t}b_{\varepsilon}\left( x,s+t_0\right)+b_{\varepsilon}\left( x,s+t_0\right)-b_{\varepsilon}\left( x,\frac{1}{s}+t_0\right).\]
The size property (\ref{size}) implies
\begin{eqnarray*}
b_{\varepsilon}\left( x,s+t_0\right)-b_{\varepsilon}\left( x,\frac{1}{s}+t_0\right) &\leq
&b_{\varepsilon}\left( x,s+t_0\right) \\
&\leq &20(1+\varepsilon)\left(
P_{s+t_0}(\left\vert f \right\vert
^{2}\omega_n)(x)+P_{s+t_0}(\left\vert \vec{g}  \right\vert ^{2}_{T^*_xX}\omega_n^{-1})(x)\right) .
\end{eqnarray*}
It follows by contractivity of the semigroup $P_t$ on $L^{r}(\mu _{\varphi })$ for every $r\in \left[1,+\infty \right]$ (read \cite{Strich} as a reference) that 
\begin{eqnarray*}
\int_{\frac{1}{s}}^{s}\int_{X}-\partial _{tt}^{2}b_{\varepsilon}\left( x,t+t_0\right) \Lambda \left( \frac{\rho (x,o)^2}{l^2} \right)
 d\mu _{\varphi }\left( x\right) tdt &\leq &20(1+\varepsilon)\left( \left\Vert
f\right\Vert ^{2}_{L^{2}(X,\omega_n\mu_{\varphi})}+\left\Vert \vec{g} \right\Vert ^{2}_{L^{2}(T^*X,\omega_n^{-1}\mu_{\varphi})}\right) \\
&& + \left\Vert
s\partial _{t}b_{\varepsilon}\left( \cdot,s+t_0\right) -\frac{1}{s}\partial _{t}b_{\varepsilon}\left( \cdot,%
\frac{1}{s}+t_0\right) \right\Vert _{L^{1}\left( \mu _{\varphi }\right) },
\end{eqnarray*}
Therefore, in order to prove (\ref{10}) it is enough to show that%
\begin{equation}\label{12}
\lim_{s\rightarrow +\infty }\left\Vert s\partial _{t}b_{\varepsilon}\left(
\cdot,s+t_0\right)\right\Vert _{L^{1}\left( \mu _{\varphi }\right) }=0,  
\end{equation}
and
\begin{equation}\label{add}
\lim_{s\rightarrow +\infty }\left\Vert \frac{1}{s}\partial _{t}b_{\varepsilon}\left(
\cdot,\frac{1}{s}+t_0\right)\right\Vert _{L^{1}\left( \mu _{\varphi }\right) }=0,  
\end{equation}
By applying chain rule we obtain
\begin{eqnarray*}
\partial _{t}b_{\varepsilon}\left(x,s+t_0\right) &=&  \frac{\partial \widetilde{B} _{\varepsilon ,Q} }{\partial Z}(v)\partial_t P_{s+t_0}|f|^2\omega_n (x) + \frac{\partial \widetilde{B} _{\varepsilon ,Q} }{\partial H}(v)\partial_t P_{s+t_0}|\vec{g}|_{T^*_xX}^2\omega_n ^{-1}(x)\\
&& + \frac{\partial \widetilde{B} _{\varepsilon ,Q} }{\partial \zeta}(v)\partial_t P_{s+t_0}f (x) + \langle \frac{\partial \widetilde{B} _{\varepsilon ,Q} }{\partial \eta}(v),\partial_t \vec{P}_{s+t_0}\vec{g} (x)\rangle_{T^*_xX} \\
&& + \frac{\partial \widetilde{B} _{\varepsilon ,Q} }{\partial r}(v)\partial_t P_{s+t_0}\omega_n (x) + \frac{\partial \widetilde{B} _{\varepsilon ,Q} }{\partial s}(v)\partial_t P_{s+t_0}\omega_n ^{-1}(x).
\end{eqnarray*}
Thus, using H\"{o}lder's inequality with some $\alpha$ and its conjugate exponent $\alpha^{\prime }$ we obtain
\begin{eqnarray} \label{sigma}
\left\Vert s\partial _{t}b_{\varepsilon}\left(
\cdot,s+t_0\right)\right\Vert _{L^{1}\left( \mu _{\varphi }\right) } &\leq & \underbrace{\left\Vert  \vert \frac{\partial \widetilde{B} _{\varepsilon ,Q} }{\partial Z}(v)\vert + \cdots + \vert \frac{\partial \widetilde{B} _{\varepsilon ,Q} }{\partial s}(v)\vert \right\Vert_{L^{\alpha}(\mu _{\varphi })}}_{\text{$\Sigma_1$}} \\
&& \times \underbrace{\left\Vert  s \left( \vert \partial _t P_{s+t_0}\left(|f|^2\omega_n\right)(\cdot)\vert + \cdots + \vert \partial _t P_{s+t_0}\omega_n ^{-1}(\cdot)\vert \right) \right\Vert _{L^{\alpha^{\prime}}(\mu _{\varphi})}}_{\text{$\Sigma_2$}} \notag.
\end{eqnarray}

Let's study $\Sigma_1$. First of all, by triangle inequality, one sees that $\Sigma_1$ can be majorized by the sum of norms of each partial derivative of $\widetilde{B} _{\varepsilon ,Q}$. These partial derivatives show terms in $P_t$ and $\vec{P}_t$. We will then need to estimate the obtained norms uniformly in $t>0$ so that we can tend $s$ to $+\infty$ in (\ref{12}). This can be done by using H\"{o}lder's inequality and contractivity of both $P_t$ and $\vec{P}_t$.\\
For instance, we already know from Section \ref{sec_bell} (proof of Lemma \ref{lemma3}, 2)) that when $a_m$ is strictly finite and positive,
\[\langle\frac {\partial B_{Q ,\mathcal{K}}}{\partial \eta},d\eta \rangle =-\frac{6\langle \eta, d\eta \rangle}{s}-\frac{2\langle \eta, d\eta \rangle}{s+\frac{M}{Q^2}}-\frac{2\langle \eta, d\eta \rangle}{s+\frac{\widetilde{N}}{Q}} -\frac{2\langle \eta, d\eta \rangle}{s+a_m^{-1}\frac{K}{Q}} , \text{ for all $d\eta$, }\]
which implies that
\[ \vert \frac {\partial B_{Q ,\mathcal{K}}}{\partial \eta}\vert \leq \vert\frac{6 \eta }{s} \vert + \vert\frac{2 \eta}{s+\frac{M}{Q^2}} \vert + \vert\frac{2 \eta}{s+\frac{\widetilde{N}}{Q}} \vert+\vert\frac{2 \eta}{s+a_m^{-1}\frac{K}{Q}} \vert.\]
Recall that $1\leq rs $ and that $M$, $\widetilde{N}$ and $a_m^{-1}\frac{K}{Q}$ are positive. Thus, we have $\vert\dfrac {\partial B_{Q ,\mathcal{K}}}{\partial \eta}\vert \leq 12 \vert \eta  \vert r$. Since $B_{Q ,\mathcal{K}}$ is $C^1$, we can also dominate $\vert\dfrac {\partial B_{\varepsilon, Q}}{\partial \eta}\vert$, but with a constant depending on $\varepsilon$. Meaning that there exists a constant $C_{\varepsilon} > 0$ such that
\[\vert\dfrac {\partial B_{\varepsilon, Q}}{\partial \eta}(\mathcal{X})\vert  \leq C_{\varepsilon}|\eta |r.\]
Finally, by replacing the variable $\mathcal{X}$ by $v(x,t+t_0)$ we obtain
\[\left\Vert  \frac {\partial \widetilde{B} _{\varepsilon ,Q} }{\partial \eta}\right \Vert_{L^{\alpha}(\mu _{\varphi})} \leq C_{\varepsilon}\left\Vert P_{t+t_0} \omega_n P_{t+t_0} |\vec{g}|_{T^*_xX}\right\Vert_{L^{\alpha}(\mu _{\varphi})},\]
and by symmetry,
\[\left\Vert  \frac {\partial \widetilde{B} _{\varepsilon ,Q} }{\partial \zeta}\right \Vert_{L^{\alpha}(\mu _{\varphi})} \leq C_{\varepsilon}\left\Vert P_{t+t_0} \omega_n ^{-1} {P}_{t+t_0}{f}\right\Vert_{L^{\alpha}(\mu _{\varphi})}.\]
The result is the same up to a constant when $a_m$ is null or infinite. Using the same arguments as above, we can dominate the first partial derivatives in the other variables. Indeed,
\[\left\Vert  \frac {\partial \widetilde{B} _{\varepsilon ,Q} }{\partial Z}\right \Vert_{L^{\alpha}(\mu _{\varphi})} = \left\Vert  \frac {\partial \widetilde{B} _{\varepsilon ,Q} }{\partial H}\right \Vert_{L^{\alpha}(\mu _{\varphi})} \leq C_{\varepsilon}.\]
Moreover, if $a_m$ is finite and positive, we have
\begin{eqnarray*}
\frac {\partial {B} _{Q, \mathcal{K}} }{\partial r} &=& \dfrac{3 \zeta ^2}{r ^2}+\dfrac{Q^2\langle \eta ,\eta\rangle\dfrac{\partial M(r,s)}{\partial r}}{(Q^2s+M(r,s))^2} + \dfrac{Q^2\zeta ^2 (Q^2+\dfrac{\partial N(r,s)}{\partial r})}{(Q^2r+N(r,s))^2} \\
& & + \dfrac{Q\zeta ^2 (Q+\dfrac{\partial \widetilde{M}(r,s)}{\partial r})}{(Qr+\widetilde{M}(r,s))^2} 
 + \dfrac{Q\langle \eta ,\eta \rangle \dfrac{\partial \widetilde{N}(r,s)}{\partial r}}{(Qs+\widetilde{N}(r,s))^2} \\
& & +\left( \dfrac{\zeta ^2(1+\dfrac{a_m }{Q}\dfrac{\partial K(r,s)}{\partial r})}{(r+a_m\dfrac{K(r,s)}{Q})^2}+\dfrac{\langle \eta ,\eta \rangle \dfrac{a_m^{-1}}{Q}\dfrac{\partial K(r,s)}{\partial r}}{(s+a_m ^{-1}\dfrac{K(r,s)}{Q})^2}\right),
\end{eqnarray*}
where the last derivative between brackets represents $\partial _{r}B_{43}$ and has been calculated as $\partial _{\eta}B_{43}$ in Section \ref{sec_bell}.\\
The partial derivatives in $r$ of $M$, $N$, $K$, $\widetilde{M}$ and $\widetilde{N}$ are
\begin{eqnarray*}
\dfrac{\partial M(r,s)}{\partial r} &=& \dfrac{4Q^2}{r^2}-s^2 \text{,  } \dfrac{\partial N(r,s)}{\partial r} = -2sr + (4Q^2+1),\\
\dfrac{\partial K(r,s)}{\partial r} &=& \dfrac{\sqrt{Q}}{2}\sqrt{\dfrac{s}{r}}-\dfrac{s}{4} \text{,  } \\
 \dfrac{\partial \widetilde{M}(r,s)}{\partial r} & = & -\dfrac{rs}{2Q}+(4Q+1)  \text{  and  } 
\dfrac{\partial \widetilde{N}(r,s)}{\partial r} = \dfrac{4Q}{r^2}-\dfrac{s^2}{4Q}.
\end{eqnarray*}
Thus, using that $ rs \geq 1$, we obtain
\begin{eqnarray*}
\vert \frac {\partial {B} _{Q, \mathcal{K}} }{\partial r} \vert & \leq & {3\zeta ^2s^2}+{\langle \eta ,\eta \rangle }+8\zeta ^2s^2+7\zeta ^2s^2\\
&& +5 \langle \eta ,\eta \rangle  + \zeta ^2s^2(1+\dfrac{a_m }{Q}\dfrac{\partial K(r,s)}{\partial r}) \\
&&+ \langle \eta ,\eta \rangle r^2(1+\dfrac{a_m ^{-1}}{Q}\dfrac{\partial K(r,s)}{\partial r}).
\end{eqnarray*}
The next step is to bound $a_m \dfrac{\partial K(r,s)}{\partial r}$ and $a_m^{-1} \dfrac{\partial  K(r,s)}{\partial r} $ from above. In fact, since $\dfrac{\partial K(r,s)}{\partial r}=\dfrac{\sqrt{Q}}{2}\sqrt{\dfrac{s}{r}}-\dfrac{s}{4}$, one can observe that $r\dfrac{\partial K(r,s)}{\partial r} \leq K(r,s)$. In addition, $rs \geq 1$ and $a_m\frac{K}{Q} \leq r$ so we can write
\begin{eqnarray*}
a_m \dfrac{\partial K(r,s)}{\partial r} & \leq & a_m sr \dfrac{\partial K(r,s)}{\partial r} \\
& \leq & s a_m K(r,s) \\
& \leq & Qsr \\
& \leq & Q^2.
\end{eqnarray*}
Likewise, $a_m^{-1} \dfrac{\partial  K(r,s)}{\partial r} \leq Qs^2$ and finally we obtain
\[\vert \frac {\partial {B} _{Q, \mathcal{K}} }{\partial r} \vert  \leq C \left( \zeta ^2 s^2 + \langle\eta , \eta\rangle r^2  +\langle\eta , \eta\rangle \right).\]
Therefore,
\begin{eqnarray*}
\left\Vert  \frac {\partial \widetilde{B} _{\varepsilon ,Q} }{\partial r}\right \Vert_{L^{\alpha}(\mu _{\varphi})} &\leq&  C_{\varepsilon} \left( \left\Vert P_{t+t_0} f {P}_{t+t_0} \omega_n ^{-1} \right\Vert ^2_{L^{2\alpha}(\mu _{\varphi})} +  \right. \\
&&  \left.  \left\Vert {P}_{t+t_0} |\vec{g}|_{T^*_xX} P_{t+t_0} \omega_n \right\Vert ^2_{L^{2\alpha}(\mu _{\varphi})} + \left\Vert \vec{P}_{t+t_0} \vec{g}\right\Vert ^2_{L^{2\alpha}(\mu _{\varphi})} \right)
\end{eqnarray*}
and again by symmetry,
\begin{eqnarray*}
\left\Vert  \frac {\partial \widetilde{B} _{\varepsilon ,Q} }{\partial s}\right \Vert_{L^{\alpha}(\mu _{\varphi})} &\leq & C_{\varepsilon} \left( \left\Vert {P}_{t+t_0} |\vec{g}|_{T^*_xX} P_{t+t_0} \omega_n \right\Vert ^2_{L^{2\alpha}(\mu _{\varphi})}+  \right. \\
&& \left.  \left\Vert P_{t+t_0}f P_{t+t_0} \omega_n ^{-1}\right\Vert ^2_{L^{2\alpha}(\mu _{\varphi})}+\left\Vert P_{t+t_0}f \right\Vert ^2_{L^{2\alpha}(\mu _{\varphi})} \right).
\end{eqnarray*}
Now, if $a_m$ is null or infinite, then $\dfrac{\partial B_{43}}{\partial r}$ is either $-\dfrac{|\zeta|^2}{r^2}$ or $0$. We repeat the previous calculations and obtain the same results up to a constant.\\
As said before, we now need to estimate from above these norms for each $i=1,\dots,6$ uniformly in $t>0$. To do so, we use H\"{o}lder's inequality and contractivity of both $P_t$ and $\vec{P}_t$ in $L^{r}(\mu _{\varphi})$ for all $r\in \left[ 1,+\infty \right] $. In other terms, we have shown that $\Sigma_1$ appearing in (\ref{sigma}) can be majorized in the following way
\[
\Sigma_1 \leq C\left( \varepsilon,f,\vec{g},\omega,\omega^{-1}\right) 
\]
uniformly in $t>0$.

As for part $\Sigma_2$, all we have to do is to show that the quantity%
\begin{equation*}
\left\Vert s\left( \left\vert \partial_{t}P_{s+t_0}\left(\vert f\vert^2\omega_n \right)\right\vert +\left\vert \partial_{t}P_{s+t_0}\left( \vert \vec{g}\vert^2_{T^*_xX}\omega_n^{-1} \right) \right\vert + \cdots +\left\vert \partial _{t}P_{s+t_0}\omega_n^{-1} \right\vert \right) \right\Vert _{L^{\alpha ^{\prime }}(\mu _{\varphi})}
\end{equation*}
tends to 0 as $s$ tends to infinity. In fact, simply observe that by the Hilbert space spectral respresentation theory, each of $s\partial _{t}P_{s+t_0}f$, $s\partial_t\vec{P}_{s+t_0}\vec{g} $, $s\partial_tP_{s+t_0}\left( \vert f\vert^2\omega_n \right) $, $s\partial_tP_{s+t_0}\left( \vert\vec{g}\vert^2_{T^*_xX}\omega_n^{-1}\right) $, $s\partial _{t}P_{s+t_0}\omega_n$ and $s\partial _{t}P_{s+t_0}\omega_n^{-1}$ converge to $0$ in $L^{2}(\mu _{\varphi})$ as $s$ tends to infinity, because functions $P_{s+t_0}f$, $\vec{P}_{s+t_0}\vec{g}$, $P_{s+t_0}\omega_n$ and $P_{s+t_0}\omega_n ^{-1}$ are square integrable.\\
To conclude, notice that $\left\Vert t\partial
_{t}P_{t}\right\Vert _{L^{r}\left(X, \mu _{\varphi }\right)}+\left\Vert t\partial _{t}\vec{P}%
_{t}\right\Vert _{L^{r}\left(X, \mu _{\varphi }\right)}$ is uniformly bounded in $t$ for all $r$ in $\left(
1,\infty \right) $ \cite[Theorem 4.6 (c)]{11}, because $P_{t}$ and $\vec{P}%
_{t}$ are symmetric Markov semigroups  respectively on $L^{2}\left(X, \mu _{\varphi }\right) $ and $L^{2}\left(T^*X, \mu _{\varphi }\right) $ and thus extend to bounded holomorphic semigroups respectively on $L^{r}\left(X, \mu _{\varphi }\right) $ and $L^{r}\left(T^*X, \mu _{\varphi }\right) $, for
all $r$ in $\left( 1,\infty \right) $ \cite[Theorem 1.4.2]{Dav}.\\
We follow the same procedure to prove (\ref{add}).\\

We now prove (\ref{11}). By integrating by parts twice, we have
\[-\int_{\frac{1}{s}}^{s}\int_{X}\Delta_{\varphi }b_{\varepsilon}\left( x,t+t_0\right)\Lambda \left( \frac{\rho (x,o)^2}{l^2} \right) d\mu _{\varphi }\left( x\right)
tdt\]
\begin{flushright}
 \[= - \int_{\frac{1}{s}}^{s}\int_{X}b_{\varepsilon}\left( x,t+t_0\right)
\Delta_{\varphi }\Lambda \left( \frac{\rho (x,o)^2}{l^2} \right) d\mu _{\varphi }\left( x\right)
tdt.\]
\end{flushright}
A simple computation based on \cite[p 140]{bak} gives%
\begin{eqnarray*}
-\Delta_{\varphi }\Lambda \left( \frac{\rho (x,o)^2}{l^2} \right) &=& -\frac{2}{l^{2}}\left( |d\rho (x,o)|^2 + \rho (x,o) 
\Delta_{\varphi }\rho (x,o) \right)\Lambda ^{\prime }\left( 
\frac{\rho ^2 (x,o)}{l^{2}}\right) \\
&&-\frac{4\rho ^2 (x,o)}{l^{4}}\left\vert d\rho (x,o)
\right\vert^2 \Lambda^{\prime \prime}\left( \frac{\rho ^2 (x,o) }{l^{2}}\right) ,
\end{eqnarray*}
for all $x\in X\backslash (\cut(o) \cup \lbrace o\rbrace)$, where $\cut(o)$ denotes the cut locus of the point $o$. In addition, since $\Ric_{\varphi }\geq 0$, by \cite[Theorem 1.1]{23} we have the local comparison result%
\begin{equation}
\Delta_{\varphi }\rho (x,o) \leq C\frac{1}{\rho (x,o)} ,  \label{15}
\end{equation}
for all $x\in X\backslash (\cut(o) \cup \lbrace o\rbrace)$.\\
Since $\left\Vert d \rho \right\Vert _{\infty }\leq 1$, and $\supp \Lambda$ is in $[0,2]$, by (\ref{15}) there exists $C>0$ such that%
\begin{eqnarray}
-\Delta_{\varphi }\Lambda \left( \frac{\rho (x,o)^2}{l^2} \right) &\geq & -C\left( \left\Vert \Lambda
^{\prime }\right\Vert _{\infty }+\left\Vert \Lambda ^{\prime \prime}\right\Vert _{\infty
}\right)\chi_{B(o,2l) \backslash B(o,l)}, 
\label{16}
\end{eqnarray}%
for all $x\in X\backslash (\cut(o) \cup \lbrace o\rbrace)$ and provided $l\geq 1$. Moreover, (\ref{16}) holds weakly on $X$ and in particular, we have
\[
\int_{X}-\Delta_{\varphi }b_{\varepsilon}\left( x,t+t_0\right) \Lambda \left( \frac{\rho (x,o)^2}{l^2} \right)
d\mu _{\varphi }\left( x\right) \geq -C\int_{B\left( o,2l\right) \backslash
B\left( o,l\right) }b_{\varepsilon}\left( x,t+t_0\right) d\mu _{\varphi }\left( x\right) 
.\]%
Hence, the size property (\ref{size}) implies that%
\begin{eqnarray*}
&& \int_{X}-\Delta_{\varphi }b_{\varepsilon}\left( x,t+t_0\right) \Lambda \left( \frac{\rho (x,o)^2}{l^2} \right)
d\mu _{\varphi }\left( x\right) \\
&& \geq  -C\int_{B\left( o,2l\right) \backslash
B\left( o,l\right) }\left( P_{t+t_0}\left(\left\vert f\right\vert
^{2}\omega_n \right) (x)+P_{t+t_0} \left( \left\vert \vec{g} \right\vert_{T^*_xX} ^{2}\omega_n ^{-1} \right) (x) \right) d\mu _{\varphi }(x).
\end{eqnarray*}
Denote the integral on the right-hand side by $\Psi _{l}\left( t\right)$. Since $\lim_{l\rightarrow +\infty }\Psi _{l}=0$ pointwise on $\mathbb{R}_{+}$
and $0\leq \Psi _{l}\left( t\right) \leq \|f\|^2_{L^{2}(X,\omega_n\mu_{\varphi})}+ \|\vec{g}%
\|^2_{L^{2}(T^{*}X,\omega_n^{-1}\mu_{\varphi})},$ the Lebesgue dominated
convergence theorem implies%
\begin{equation}
\liminf_{l\rightarrow +\infty }\int_{\frac{1}{s}}^{s}\int_{X}-\Delta_{\varphi }b_{\varepsilon}\left( x,t+t_0\right) \Lambda \left( \frac{\rho (x,o)^2}{l^2} \right) d\mu _{\varphi }\left(
x\right) \geq 0.  \label{Tun1}
\end{equation}%
It remains to prove that 
\[
\limsup_{l\rightarrow +\infty }\int_{\frac{1}{s}}^{s}\int_{X}-\Delta_{\varphi }b_{\varepsilon}\left( x,t+t_0\right) \Lambda \left( \frac{\rho (x,o)^2}{l^2} \right) d\mu _{\varphi }\left(
x\right) tdt\leq 0.
\]%
Consider the function 
\[
R(x,t+t_0)=20 \left( 1+\varepsilon \right) 
\left( P_{t+t_0}\left(\left\vert f\right\vert ^{2}\omega_n \right) (x) +P_{t+t_0}\left( \left\vert \vec{g}
\right\vert_{T^*_xX} ^{2}\omega_n^{-1}\right) (x)\right).  
\]%
We have $b_{\varepsilon}-R\leq 0$ on $K_{s,l}$ and by an argument similar to the
one we used to prove (\ref{Tun1}) one shows that 
\[
\limsup_{l\rightarrow +\infty }\int_{\frac{1}{s}}^{s}\int_{X}-\Delta_{\varphi }\left(
b_{\varepsilon}\left( x,t+t_0\right) -R(x,t+t_0)\right) \Lambda \left( \frac{\rho (x,o)^2}{l^2} \right) d\mu
_{\varphi }\left( x\right) tdt\leq 0.
\]%
It suffices then to prove that%
\begin{equation}
\limsup_{l\rightarrow +\infty }\int_{1/s}^{s}\int_{X}\Delta_{\varphi }R\left( x,t+t_0\right) \Lambda \left( \frac{\rho (x,o)^2}{l^2} \right) d\mu _{\varphi }\left( x\right)
tdt=0,  \label{Tun2}
\end{equation}%
using an integration by parts. To this purpose, notice first that the composite function $\Lambda \left( \dfrac{\rho ^2}{l^2} \right)$ is equal to zero for $\rho \geq 2l$ and hence we have
\[ \|d\Lambda \left( \frac{\rho ^2}{l^2} \right)\|_{\infty} \leq \frac{4\|\Lambda '\|_{\infty}}{l} .\] Then, 
\begin{eqnarray} \label{Tun3}
\left\vert \int_{\frac{1}{s}}^{s}\int_{X}\Delta_{\varphi }R\left( x,t+t_0\right)
\Lambda \left( \frac{\rho (x,o)^2}{l^2} \right) d\mu _{\varphi }\left( x\right) tdt\right\vert \leq \frac{4\left\Vert \Lambda '\right\Vert _{\infty }}{l}%
\int_{\frac{1}{s}}^{s}\int_{X}\left\vert dR\left( x,t+t_0\right) \right\vert d\mu
_{\varphi }\left( x\right) tdt.
\end{eqnarray}%
Now, notice that by Lemma \ref{lemma1} and semigroup contractivity we have
\begin{eqnarray*}
\int_{X} |dR(x,t+t_0)|  d\mu_{\varphi } (x)  &\leq & C(\varepsilon) \left( \int_{X}  | \vec{P_t}\left( dP_{t_0}( \left\vert f\right\vert ^{2}\omega_n )\right) | (x) \right. \\
&& + \left. | \vec{P_t} \left( d P_{t_0}( \left\vert {\vec{g}} \right\vert _{T^*_xX} ^{2}\omega^{-1}_n) \right)|  (x)  d\mu
_{\varphi } (x) \right)  \\
& \leq & C(\varepsilon)  \int_{X}  | dP_{t_0}( \left\vert f\right\vert ^{2}\omega_n ) | (x) + | d P_{t_0}( \left\vert {\vec{g}} \right\vert_{T^*_xX} ^{2}\omega^{-1}_n) |  (x)  d\mu
_{\varphi } (x)  \\
& \leq & C(\varepsilon)\left[ \left( \int_{X} |dP_{t_0}(|f|^2\omega_n)|^2 d\mu
_{\varphi } (x) \right)^{1/2} \times \left( \int_{X}  d\mu
_{\varphi } (x) \right)^{1/2} \right. \\
&& + \left. \left( \int_{X} |dP_{t_0}(|\vec{g}|_{T^*_xX}^2\omega ^{-1}_n)|^2  d\mu
_{\varphi } (x) \right)^{1/2} \times \left( \int_{X}  d\mu
_{\varphi }(x) \right)^{1/2} \right]
\end{eqnarray*}
Now observe that by using (\ref{page4}) and because $X$ is without a boundary we have
\begin{eqnarray*}
\int_{X} | dP_{t_0}(|f|^2\omega_n)(x)|^2 d\mu
_{\varphi }(x) & \leq & \left( \int_{X} |P_{t_0} (|f|^2\omega_n)(x)|^2 d\mu_{\varphi } (x)\right)^{1/2} \\ && \times \left( \int_{X} |\Delta_{\varphi}P_{t_0} (|f|^2\omega_n)(x)|^2 d\mu_{\varphi } (x)\right)^{1/2} \\
& = & \| P_{t_0} (|f|^2\omega_n) \| _{L^2(X,\mu_{\varphi })} \times \| \Delta_{\varphi}P_{t_0} (|f|^2\omega_n) \| _{L^2(X,\mu_{\varphi })} \\
& \leq & \|f\|^4_{\infty}\| \omega_n \| _{L^2(X,\mu_{\varphi })}.
\end{eqnarray*}
The last inequality holds since $\Delta_{\varphi}$ is self-adjoint on $L^2$ and hence admits bounded functional calculus. We obtain similar results fo the operator $\vec{\Delta}_{\varphi}$  and so
\begin{eqnarray*}
\int_{X} |dR_0(x,t)|  d\mu_{\varphi } (x)  &\leq & C(\varepsilon) \left( \|f\|^4_{\infty}\| \omega_n \| _{L^2(X,\mu_{\varphi })} + \|\vec{g}\|^4_{\infty}\| \omega_n^{-1} \| _{L^2(X,\mu_{\varphi })} \right)
\end{eqnarray*}
As a consequence, the right-hand side integral of (\ref{Tun3}) is finite. Letting $l$ tend to infinity implies (\ref%
{Tun2}) and concludes the proof of the proposition. 
\end{proof}
\subsection{Conclusion}

\begin{proof}[Proof of Theorem \ref{thm1}]
To finish the proof of the theorem, we use a standard trick. Indeed, by combining the reverse Fatou lemma as well as Propositions \ref{below} and \ref{above} and passing to the limit as $\varepsilon$ tends to $0$ we get
\[\int_{0}^{\infty} \int_{X}|\overline{\nabla} P_{t}f(x)||\overline{\nabla} \vec{P_{t}}\vec{g}(x)| t \, d\mu_{\varphi}(x) \deriv t \leq 20\widetilde Q_{2}(\omega_n)(\|f\|^2_{L^{2}(X,\omega_n\mu_{\varphi})}+ \|\vec{g}\|^2_{L^{2}(T^{*}X,\omega_n^{-1}\mu_{\varphi})}).\]
We now apply the above inequality to $\lambda f$ and $\lambda^{-1} \vec{g}$ instead of $f$ and $\vec{g}$ and then minimize the result in $\lambda>0$.
\end{proof}

\section{Enlarging the set of weights}
Now that we have boundedness results for a certain type of weights in $L^2(X,\mu_{\varphi})$, we will enlarge the set of weights $\omega$ satisfying Theorem \ref{thm1} to all $L^1_{loc}(X,\mu_{\varphi})$, provided that $\widetilde{Q}_2(\omega)$ is well defined and finite. We heavily use in the following proof the fact that constants are in $L^2(X,\mu_{\varphi})$\footnote{This condition implies that $\mu_{\varphi}(X)$ is finite and therefore the kernel of $ -\Delta_{\varphi}$ is the set of constant functions on $X$.}. The main problem would be defining $P_t \omega$ when $\omega \in L^1_{loc}(X,\mu_{\varphi})$. We will proceed as follows:\\
As stated in Remark \ref{remark1}, take $\omega \in L^1_{loc}(X,\mu_{\varphi})$ and define its two-sided truncation 
\[\omega _n = n^{-1} \chi _{\omega \leq n^{-1}} + \omega \chi _{n^{-1} \leq \omega \leq n} + n \chi _{\omega \geq n}, \]
where $\chi$ is the characteristic function. Then we have the following properties
\begin{equation}
\omega _n \in L^2(X,\mu_{\varphi}); \label{L2}
\end{equation}
\begin{equation}
\widetilde{Q}_2(\omega _n) \leq \widetilde{Q}_2(\omega); \label{Q2}
\end{equation}
\begin{equation}
\widetilde{Q}_2(\omega _n) \underset{n\to +\infty}{\longrightarrow}  \widetilde{Q}_2(\omega). \label{lim_omega_n}
\end{equation}
This means that we can approximate a function in the class $\widetilde{A}_2$ by bounded functions from the same class, with control of their $\widetilde{A}_2$ constants.\\
Property (\ref{L2}) is immediate because constant functions are 
integrable with respect to our measure. Besides, $\omega _n \underset{n\to +\infty}{\longrightarrow} \omega$ and consequently, the definition of $P_t \omega$ arises naturally by posing $P_t \omega = \lim \limits_{n \to \infty}P_t \omega _n$. This limit exists and makes sense because $P_t \omega \leq \varliminf_{n \to \infty} P_t \omega _n$ by Fatou's lemma. Furthermore, 
\[P_t \omega _n \leq \dfrac{1}{n} + P_t(\omega \chi _{n^{-1} \leq \omega \leq n} + n \chi _{\omega \geq n}).\]
The first term tends to zero as $n$ tends to infinity and the term between brackets is increasing in $n$, which means that we can use the monotone convergence theorem to obtain
\begin{eqnarray*}
P_t \omega &\leq & \varliminf_{n \to \infty} P_t \omega _n \\
& \leq & \varlimsup \limits_{n \to \infty}P_t \omega _n \\
& \leq & \varlimsup \limits_{n \to \infty} \left( \dfrac{1}{n} + P_t(\omega \chi _{n^{-1} \leq \omega \leq n} + n \chi _{\omega \geq n}) \right) \\
& =& P_t \omega .
\end{eqnarray*} 
We need the following preliminary lemma to prove properties (\ref{Q2}) and (\ref{lim_omega_n}) :

\begin{lemma}
Let $\omega ^n = \omega \chi _{\omega \leq n} + n \chi _{\omega \geq n}$. Then we have $\widetilde{Q}_2(\omega ^n) \leq \widetilde{Q}_2(\omega)$ and $\widetilde{Q}_2(\omega ^n) \underset{n\to +\infty}{\longrightarrow}  \widetilde{Q}_2(\omega)$.
\end{lemma}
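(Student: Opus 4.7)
The plan is to reduce both assertions to a pointwise statement at each $(x,t)\in X\times\mathbb{R}_+$. Since $P_t 1 = 1$ (the semigroup is Markovian), the measure $d\nu:=p_t(x,\cdot)\,d\mu_\varphi$ is a probability measure on $X$, and the products $P_t(\omega^n)(x)\,P_t((\omega^n)^{-1})(x)$ and $P_t\omega(x)\,P_t\omega^{-1}(x)$ become $E_\nu[\omega\wedge n]\,E_\nu[(\omega\wedge n)^{-1}]$ and $E_\nu[\omega]\,E_\nu[\omega^{-1}]$. Both claims therefore follow if, for every non-negative $\nu$-a.e.\ finite random variable $W$, the function
\begin{equation*}
f(s) := E_\nu[W\wedge s]\,E_\nu[(W\wedge s)^{-1}]
\end{equation*}
is non-decreasing on $(0,\infty]$ and converges to $f(\infty)=E_\nu[W]\,E_\nu[W^{-1}]$ as $s\to\infty$.

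For the monotonicity I would differentiate directly: a short computation using $F(s)=\nu(W\leq s)$ gives $\frac{d}{ds}E_\nu[W\wedge s]=\nu(W>s)$ and $\frac{d}{ds}E_\nu[(W\wedge s)^{-1}]=-\nu(W>s)/s^{2}$, hence
\begin{equation*}
f'(s) \;=\; \frac{\nu(W>s)}{s^{2}}\Bigl(s^{2}E_\nu[(W\wedge s)^{-1}] - E_\nu[W\wedge s]\Bigr).
\end{equation*}
Because $W\wedge s\leq s$ we have both $E_\nu[W\wedge s]\leq s$ and $E_\nu[(W\wedge s)^{-1}]\geq 1/s$, so the parenthesis is at least $s-s=0$, giving $f'\geq 0$. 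Taking the supremum over $(x,t)$ yields $\widetilde{Q}_2(\omega^n)\leq\widetilde{Q}_2(\omega)$.

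For the convergence, I would argue pointwise in $(x,t)$. Monotone convergence applied to $\omega^n\uparrow\omega$ gives $P_t(\omega^n)(x)\uparrow P_t\omega(x)$, and monotone convergence for decreasing sequences applied to $(\omega^n)^{-1}\downarrow \omega^{-1}$ (dominated by $(\omega^1)^{-1}=\max(\omega^{-1},1)$, whose $P_t$-value at $x$ is finite as soon as $P_t\omega^{-1}(x)<\infty$) gives $P_t((\omega^n)^{-1})(x)\downarrow P_t\omega^{-1}(x)$. Consequently $P_t(\omega^n)(x)P_t((\omega^n)^{-1})(x)\to P_t\omega(x)P_t\omega^{-1}(x)$. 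Given $\varepsilon>0$ (assume $\widetilde{Q}_2(\omega)<\infty$; the case $+\infty$ needs only a diagonal variant), choose $(x_0,t_0)$ so that $P_{t_0}\omega(x_0)P_{t_0}\omega^{-1}(x_0)>\widetilde{Q}_2(\omega)-\varepsilon$, and evaluate at $(x_0,t_0)$ for $n$ large to obtain $\widetilde{Q}_2(\omega^n)>\widetilde{Q}_2(\omega)-2\varepsilon$. Combined with the upper bound from the first step, this gives $\widetilde{Q}_2(\omega^n)\to\widetilde{Q}_2(\omega)$.

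The only genuinely delicate point is the monotonicity of $f$: truncating $\omega$ from above simultaneously lowers $P_t\omega$ and raises $P_t\omega^{-1}$, so the direction of motion of the product is not obvious a priori. The differentiation argument captures the correct balance via the elementary inequalities $E_\nu[W\wedge s]\leq s$ and $E_\nu[(W\wedge s)^{-1}]\geq 1/s$, and is the crux of the proof; everything else is routine monotone/dominated convergence.
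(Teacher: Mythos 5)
Your proof is correct, and the core step is handled by a genuinely different mechanism than the paper's. For the monotonicity, the paper splits $P_t\omega=A+B$ and $P_t\omega^n=A+B_n$ with $A=P_t(\omega\chi_{\omega\le n})$, $B=P_t(\omega\chi_{\omega>n})$, $B_n=nP_t(\chi_{\omega>n})$ (and the analogous quantities for $\omega^{-1}$), expands the difference of the two products, controls the term $BB^{-1}-B_nB_n^{-1}$ by Cauchy--Schwarz/Jensen, and bounds the cross terms by a pointwise comparison on $\{\omega>n\}$ using $A\le n$ and $n\omega A^{-1}\ge A$. You instead fix the probability measure $d\nu=p_t(x,\cdot)\,d\mu_\varphi$, vary the truncation level continuously, and show $f'(s)\ge 0$ from the two elementary bounds $E_\nu[W\wedge s]\le s$ and $E_\nu[(W\wedge s)^{-1}]\ge 1/s$ --- the same facts that drive the paper's cross-term estimate, but packaged so that no Cauchy--Schwarz step is needed. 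Your route is shorter and yields slightly more: full monotonicity of $s\mapsto P_t(\omega\wedge s)(x)\,P_t((\omega\wedge s)^{-1})(x)$, hence monotonicity of $n\mapsto\widetilde{Q}_2(\omega^n)$, whereas the paper only compares the truncated weight against $\omega$ itself. The convergence argument (pointwise convergence of the product, then a near-optimal point) is essentially the paper's Fatou argument in a different dress. Two small points you should make explicit: the differentiation requires knowing that $s\mapsto E_\nu[W\wedge s]$ and $s\mapsto E_\nu[(W\wedge s)^{-1}]$ are locally absolutely continuous (the first is $1$-Lipschitz via $E_\nu[W\wedge s]=E_\nu[W\wedge s_0]+\int_{s_0}^s\nu(W>u)\,du$, and the second is locally Lipschitz once $E_\nu[W^{-1}]<\infty$), so that $f'\ge 0$ a.e.\ indeed implies $f$ is non-decreasing; and the degenerate case $E_\nu[W^{-1}]=\infty$ (where $(W\wedge s)^{-1}\ge W^{-1}$ forces $f\equiv\infty$) should be dismissed by invoking the paper's standing assumption in Remark \ref{remark1} that $P_t\omega_n$ and $P_t\omega_n^{-1}$ are finite almost everywhere. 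Neither point is a gap, only a matter of writing.
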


\begin{proof}
If $\omega ^n = \omega \chi _{\omega \leq n} + n \chi _{\omega \geq n}$, then $(\omega ^n)^{-1} = \omega ^{-1} \chi _{\omega ^{-1} \geq n^{-1}} + n^{-1} \chi _{\omega ^{-1} \leq n^{-1}}$. \\
Thus, we need to prove that
\[P_t \omega P_t \omega ^{-1} - P_t \omega ^n P_t (\omega ^n)^{-1} \geq 0.\]
Write $P_t \omega = P_t \left( \omega \chi_{\omega \leq n} \right)  + P_t \left( \omega \chi_{\omega > n} \right)$ and denote 
\begin{eqnarray*}
A&=&P_t \left( \omega \chi_{\omega \leq n}\right); A^{-1}=P_t \left( \omega ^{-1} \chi_{\omega \leq n}\right);\\
B&=&P_t \left( \omega \chi_{\omega > n}\right);  B^{-1}=P_t \left(\omega ^{-1}\chi_{\omega > n}\right);\\
B_n&=&nP_t\left( \chi_{\omega > n}\right); B_n^{-1}=n^{-1}P_t\left( \chi_{\omega > n} \right).
\end{eqnarray*}
so that
\begin{eqnarray*}
P_t \omega P_t \omega ^{-1} - P_t \omega ^n P_t (\omega ^n)^{-1} &=& (A+B)(A^{-1}+B^{-1})-(A+B_n)(A^{-1}+B_n^{-1}) \\
&=& A(B^{-1}-B_n^{-1})+A^{-1}(B-B_n)\\
&& +(BB^{-1}-B_nB_n^{-1}).
\end{eqnarray*}
The last term between brackets is positif thanks to H\"{o}lder's and Jensen's inequalities. For the other terms, notice that 
\begin{eqnarray*}
B^{-1}-B_n^{-1} &=& \int p_t(x,y) \omega ^{-1}(y)\chi _{\omega >n}(y) d\mu_{\varphi}(y) -\int n^{-1}p_t(x,y) \chi _{\omega >n}(y) d\mu_{\varphi}(y)   \\
&=&  \int p_t(x,y)(n\omega ^{-1}(y) n^{-1} - \omega (y) \omega ^{-1} (y)n^{-1})\chi _{\omega >n}(y) d\mu_{\varphi}(y) \\
&=& \int p_t(x,y)(n - \omega (y)) n^{-1}\omega ^{-1} (y)\chi _{\omega >n}(y) d\mu_{\varphi}(y) 
\end{eqnarray*}
and analogously,
\[B-B_n= \int p_t(x,y)(\omega (y)-n) \chi _{\omega >n}(y) d\mu_{\varphi}(y)  \]
Hence,
\[P_t \omega P_t \omega ^{-1} - P_t \omega ^n P_t (\omega ^n)^{-1} \geq \int p_t(x,y)\left( \dfrac{\omega (y) -n}{n\omega (y)} \left( n \omega (y) A^{-1}- A \right) \right)\chi _{\omega >n}(y)d\mu_{\varphi}(y)\]
The kernel $p_t$ being positif, the integral on the right side is positif too, since $\omega > n$, $A \leq n$ and $A^{-1} \geq n^{-1}$.\\
The second part of the lemma follows from Fatou's lemma, \cite[Section 4]{rez}. Indeed, $(\omega ^n)_n$ is a positif sequence that converges to $\omega$ and we have
\begin{eqnarray*}
P_t \omega P_t \omega ^{-1} &= & \int p_t(x,y)\omega (y)d\mu_{\varphi}(y)\int p_t(x,y)\omega ^{-1} (y)d\mu_{\varphi}(y) \\
&\leq & \varliminf_{n \to \infty}  \int p_t(x,y)\omega ^n (y)d\mu_{\varphi}(y) \int p_t(x,y)(\omega ^n) ^{-1} (y)d\mu_{\varphi}(y) \\
& \leq & \varliminf_{n \to \infty}  \widetilde{Q}_2(\omega ^n) \\
& \leq & \varlimsup_{n \to \infty}  \widetilde{Q}_2(\omega ^n) \\
& \leq & \widetilde{Q}_2(\omega ), \text{ by first part of the lemma.}
\end{eqnarray*}
Taking supremum over $(x,t)\in X \times \mathbb{R}_+$ on the left-hand side finishes the proof.
\end{proof}
Now, to prove (\ref{Q2}) and (\ref{lim_omega_n}), apply the previous lemma to $(\omega^n) ^{-1}$.\\

As a consequence, if we let $n$ tend to infinity in (\ref{E1}), it only remains to prove that 
\[\|f\|_{L^2(X,\omega _n\mu_{\varphi})} \underset{n\to +\infty}\longrightarrow \|f\|_{L^2(X,\omega\mu_{\varphi})} \text{ and } \|\vec{g}\|_{L^2(T^*_xX,\omega^{-1} _n\mu_{\varphi})} \underset{n\to +\infty}\longrightarrow \|\vec{g}\|_{L^2(T^*_xX,\omega^{-1}\mu_{\varphi})}.\]
To do that, we are going to use the dominated convergence theorem. Indeed, by construction of $\omega _n$ and $\omega^{-1} _n$, we know that
\[\omega _n \leq \omega +1 \text{ and } \omega^{-1} _n \leq \omega^{-1} + 1.\]
We can pass to the limit in $n$ since $f\in L^2(X,\mu_{\varphi}) \cap L^2(X,\omega\mu_{\varphi})$ and $\vec{g} \in L^2(T^*_xX,\omega^{-1}\mu_{\varphi}) \cap L^2(T^*_xX,\mu_{\varphi}) $.\\
We can also recover Corollary \ref{coro} by using Formula (\ref{3}) and pass to the limit in $n$ in (\ref{h(t)}), again by the dominated convergence theorem.

\begin{remark}[Remark on the sharpness of the result]
To prove the sharpness of the result, notice that in the particular case when $\varphi=0$, the Hilbert transform on the unit disk is an operator that falls into our framework and the result is already known to be sharp \cite{PW}, meaning that the estimate in terms of dependence on weights is linear and can't be improved. 
\end{remark}

\section{Case of the Gauss space}
We now present a concrete example for the previous weighted estimates, namely the Gauss space, which is obtained when $X=\mathbb{R}^n$ and $\varphi (x) = \dfrac{|x|^2}{2}$. We then have the gaussian measure 
\[d\gamma (x) = (2\pi)^{-n/2} \exp (-\frac{|x|^2}{2})dx\] 
on $\mathbb{R}^n$ and the Ornstein-Uhlenbeck operator \footnote{The Ornstein-Uhlenbeck operator is an integral operator that admits a kernel called the Mehler kernel, which can be given by an explicit representation.}
\[Lf(x) = \Delta f(x) - x. \nabla f(x) \]
on $L^2(\mathbb{R}^n, d\gamma)$. This operator generates a diffusion semigroup $P_t$, which has been the object of many investigations during the last decades. \\
Note also that $\Ric_{L} \geq 0.$\\

If we define by $\mathcal{R}_L = d \circ (-L)^{-1/2}$ the Riesz transform associated to the Ornstein-Uhlenbeck semigroup, then

\begin{corollary}
For all $f\in \overline{L^{2}(\mathbb{R}^n,\omega\gamma)\cap R(-L)}^{L^2}$ and $\omega,\omega^{-1} \in L^1_{loc}(\mathbb{R}^n,\gamma)$ such that $\omega > 0$ $\gamma$-a.e we have
\[ \|\mathcal{R}_{L}f\|_{L^{2}(\mathbb{R}^n,\omega\gamma)}\leq 80\widetilde
Q_{2}(\omega)\|f\|_{L^{2}(\mathbb{R}^n,\omega\gamma)}. \]
\end{corollary}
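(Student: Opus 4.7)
The strategy is entirely to verify that the Gauss space fits all hypotheses of Corollary \ref{coro} together with the extension to $L^1_{\mathrm{loc}}$ weights carried out in Section 6, and then invoke those statements directly. No new analytic ingredient is needed.

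First I would check the geometric setup. The flat Euclidean space $\mathbb{R}^n$ is a complete Riemannian manifold without boundary. Constants lie in $L^2(\mathbb{R}^n, d\gamma)$ because $\gamma$ is a probability measure, so in particular constants are integrable. For $\varphi(x) = |x|^2/2$ one has $\nabla^2\varphi = I_n$, the identity $2$-tensor, while $\Ric \equiv 0$ on $\mathbb{R}^n$; thus $\Ric_\varphi = I_n \geq 0$. A direct computation also gives $\Delta_\varphi f = \Delta f - x\cdot\nabla f = Lf$ and hence $\mathcal{R}_\varphi = d\circ(-L)^{-1/2} = \mathcal{R}_L$. The Poisson semigroup $P_t = \exp(-t(-L)^{1/2})$ is well defined on $L^2(\mathbb{R}^n,\gamma)$ and defines the characteristic $\widetilde{Q}_2(\omega)$ used in the statement.

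Given $\omega,\omega^{-1}\in L^1_{\mathrm{loc}}(\mathbb{R}^n,\gamma)$ with $\omega>0$ $\gamma$-a.e.\ and $\widetilde{Q}_2(\omega)<\infty$, I would introduce the truncation $\omega_n$ from Remark \ref{remark1}. By the properties established in Section 6, $\omega_n$ and $\omega_n^{-1}$ belong to $L^2(\mathbb{R}^n,\gamma)$ (this uses precisely that constants are square-integrable for $\gamma$), one has $\widetilde{Q}_2(\omega_n)\leq \widetilde{Q}_2(\omega)$, and $\widetilde{Q}_2(\omega_n)\to \widetilde{Q}_2(\omega)$ as $n\to\infty$. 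Applying Corollary \ref{coro} with $X=\mathbb{R}^n$, $\mu=dx$ and $\varphi=|x|^2/2$ then yields
\[
\|\mathcal{R}_L f\|_{L^2(T^*\mathbb{R}^n,\omega_n\gamma)} \leq 80\,\widetilde{Q}_2(\omega_n)\,\|f\|_{L^2(\mathbb{R}^n,\omega_n\gamma)}
\]
for every $f\in \overline{L^2(\mathbb{R}^n,\omega_n\gamma)\cap R(-L)}^{L^2}$.

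Finally I would pass to the limit $n\to\infty$, following exactly the argument at the end of Section 6. Since $\omega_n\leq \omega+1$ and $\omega_n^{-1}\leq \omega^{-1}+1$ by construction, and since $f\in L^2(\mathbb{R}^n,\gamma)\cap L^2(\mathbb{R}^n,\omega\gamma)$ (the first inclusion coming from the closure in $L^2$ appearing in the statement, together with $\gamma$ being finite), the dominated convergence theorem gives $\|f\|_{L^2(\omega_n\gamma)}\to \|f\|_{L^2(\omega\gamma)}$, while Fatou's lemma provides $\|\mathcal{R}_L f\|_{L^2(\omega\gamma)}\leq \varliminf_n \|\mathcal{R}_L f\|_{L^2(\omega_n\gamma)}$. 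Combined with the convergence of the characteristics, this produces the stated inequality. There is no genuine obstacle: the non-triviality of the preceding sections (the existence of the six-variable Bellman function, the curvature-dimension arguments, and the careful extension to locally integrable weights) was already absorbed in Corollary \ref{coro} and Section 6, and the present corollary is simply the specialization to the flat case with quadratic potential.
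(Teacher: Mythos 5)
Your proposal is correct and follows exactly the route the paper intends: the paper itself offers no written proof of this corollary beyond observing that $\Ric_L\geq 0$ and that the Ornstein--Uhlenbeck setting falls within the general framework, and your verification of the hypotheses ($\mathbb{R}^n$ complete without boundary, constants integrable for the probability measure $\gamma$, $\nabla^2\varphi=I_n$ so $\Ric_\varphi\geq 0$, $\Delta_\varphi=L$) followed by an appeal to Corollary \ref{coro} and the Section 6 truncation-and-limit argument is precisely the intended specialization. No further comment is needed.
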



\begin{thebibliography}{99}
\bibitem{AIS} K.Astala, T.Iwaniec, E.Saksman, Beltrami operators in the plane, Duke Math. J., Volume 107, Number 1 (2001), 27-56.

\bibitem{AM} P.Auscher, J.M.Martell, Weighted norm inequalities, off-diagonal estimates and elliptic operators, Contemporary Mathematics 505 (2010), 61-83.

\bibitem{Bakry} D.Bakry, Comptes rendus de l'Acad\'emie des Sciences, Tome 303, s\'erie I no 1 - (1986), 23-26.

\bibitem{bak} D.Bakry, Etude des transformations de Riesz dans les
vari\'et\'es riemanniennes \`a courbure de Ricci minor\'ee, S\'eminaire de
probabilit\'es de Strasbourg 21 (1987), no. 1.

\bibitem{BFP} F.Bernicot, D.Frey, S.Petermichl, Sharp weighted norm estimates beyond Calderón-Zygmund theory. Anal. PDE 9 (2016), no. 5, 1079–1113. 42B20 (58J35).

\bibitem{DC} A.Carbonaro, O.Dragicevic, Bellman function and linear dimension-free estimates in a theorem of Bakry. J. Funct. Anal. 265 (2013), no. 7, 1085–1104. (Reviewer: Hussain Mohammed Al-Qassem) 42B10.

\bibitem{Dav} E.B.Davies, Heat kernel and spectral theory, Cambridge university press (1989).

\bibitem{DP2} K.Domelevo, S.Petermichl, A sharp maximal inequality for differentially subordinate martingales under a change of law,  arXiv:1607.06319 (2016).

\bibitem{PD} K.Domelevo, S.Petermichl, Differential subordination under change of law, arXiv:1604.01606 (2016).

\bibitem{PDW} K.Domelevo, S.Petermichl, J.Wittwer, A linear dimensionless bound for the weighted Riesz vector. 2014. hal-01097113.


\bibitem{DV} O.Dragicevic, A.Volberg, Bilinear embedding for real elliptic
differential operators in divergence form with potentials, J. Funct. Anal.
261 no. 10 (2011).


\bibitem{Emery} M.Emery. Stochastic Calculus in Manifolds. With an appendix by P. A. Meyer. Universitext,
Springer, (1989).

\bibitem{11} K.J.Engel, R.Nagel, One-parameter semigroups for linear
evolution equations, Graduate Texts in Mathematics, 194, Springer-Verlag,
New York, (2000).

\bibitem{FKP} R.Fefferman, C.Kenig, J.Pipher, The theory of weights and the Dirichlet problem for elliptic equations. Ann. of Math. (2) 134 (1991) , no. 1, 65--124.

\bibitem{GV} R.F.Gundy, N.Varopoulos, Les transformations de Riesz et les int\'egrales stochastiques, C.R. Acad. Sci. Paris 289, 13-16, (1979).

\bibitem{HMW} R.Hunt, B.Muckenhoupt, R.Wheeden, Weighted norm inequalities for the conjugate function and Hilbert transform, Trans. Amer. Math. Soc. 176 (1973), 227-251. 

\bibitem{H} T.Hytonen, The sharp weighted bound for general Calder\'on-Zygmund operators. Ann. of Math. (2) 175 (2012), no. 3, 1473–1506. (Reviewer: \'Arp\'ad B\'enyi) 42B20 (42B25).

\bibitem{L} M.T.Lacey, An elementary proof of the A2 Bound, Israel J. Math., to appear (2015).


\bibitem{L2} J.M.Lee, Introduction to smooth manifolds, Graduate texts in
mathematics, (2000).

\bibitem{L1} J.M.Lee, Riemmannian Manifolds an introduction to curvature,
Graduate texts in mathematics, (1997).

\bibitem{lott} J.Lott, Some geometric properties of the Bakry-Emery-Ricci tensor, Comment. Math. Helv., 78(4), (2003).

\bibitem{LMP} L.D.L\'opez-S\'anchez, J.M.Martell, J.Parcet, Dyadic harmonic analysis beyond doubling measures, Adv. Math. 267 (2014), 44-93.


\bibitem{NT} F.L.Nazarov, S.R.Treil, The hunt for a Bellman function: applications to estimates for singular integral operators and to other classical problems of harmonic analysis, Algebra i Analiz, 8:5 (1996), 32–162.

\bibitem{NTV} F.Nazarov, S.Treil, A.Volberg, The Bellman functions and
two-weight inequalities for Haar multipliers, J. Amer. Math. Soc. 12 (1999),
no. 4.

\bibitem{Petermichl} S.Petermichl, The sharp bound for the Hilbert transform on weighted Lebesgue spaces in terms of the classical A(p) characteristic, American Journal of Mathematics 129(5):1355-1375, (2007).

\bibitem{PW} S.Petermich, J.Wittwer, A sharp estimate for the weighted
Hilbert transform via Bellman functions, Michigan Math. J. 50 (2002).

\bibitem{PV} S.Petermichl, A.Volberg, Heating of the Ahlfors-Beurling operator: weakly quasiregular maps on the plane are quasiregular, Duke Math. J., Volume 112, Number 2 (2002), 281-305.

\bibitem{20} S.Pigola, M.Rigoli, A.G.Setti, Vanishing and finiteness results
in geometric analysis. A generalization of Bochner technique, Progress in
Mathematics 266, Birkhauser, (2008).


\bibitem{rez} A.Reznikov, V.Vasyunin, A.Volberg, An observation: cut-off of the weight $w$ does not
increase the $A_{p_1,p_2}$-“norm” of $w$. arXiv:1008.3635 [math.AP].

\bibitem{Strich} R.Strichartz, Analysis of the Laplacian on the complete
Riemannian manifold, J. Funct. Anal.52 (1983).

\bibitem{TTV} C.Thiele, S.Treil, A.Volberg, Weighted martingale multipliers in non-homogeneous setting and outer measure spaces, Adv. Math. 285 (2015), 1155–1188. 

\bibitem{V+} A.Volberg, P.Zorin-Kranich, Sparse domination on non-homogeneous spaces with an application to $A_p$ weights, arXiv:1606.03340, (2016).

\bibitem{23} G.Wei, W.Wylie, Comparison geometry for the Bakry-Emery Ricci
tensor, J. Differential Geom. 83 (2009), no. 2.

\bibitem{witt} J.Wittwer, A Sharp Estimate on the Norm of the Martingale Transform, Mathematical Research Letters 7(1):1-12 · January (2000).

\bibitem{zhu} S.H.Zhu, The comparison geometry of Ricci curvature. In
Comparison geometry, volume 30 of Math.
Sci. Res. Inst. Publ. Cambridge Univ. Press,
Cambridge, (1997).
\end{thebibliography}
\end{document}